\theoremstyle{plain}
\newtheorem{theorem}{Theorem}[section]
\newtheorem{lemma}[theorem]{Lemma}
\newtheorem{proposition}[theorem]{Proposition}
\newtheorem{corollary}[theorem]{Corollary}
\newtheorem{definition}[theorem]{Definition}
\newtheorem{hyp}[theorem]{Assumption}
\newtheorem{remark}[theorem]{Remark}
\numberwithin{equation}{section}
\newcommand{\minibox}[2][c]{\begin{tabular}{#1}#2\end{tabular}}
\newcommand{\Rb}  {{\mathbb R}}
\newcommand{\As} {{\mathcal A}}
\newcommand{\Fs} {{\mathcal F}}
\newcommand{\Ms} {{\mathcal M}}
\newcommand{\Ps} {{\mathcal P}}
\newcommand{\Qs} {{\mathcal Q}}
\newcommand{\Ss} {{\mathcal S}}
\newcommand{\al}{\alpha}
\renewcommand{\phi}{\varphi}
\newcommand{\la}{\lambda}
\newcommand{\ind}{1\!\kern-1pt \mathrm{I}}
\newcommand{\rsto}{]\!\kern-1.8pt ]}
\newcommand{\lsto}{[\!\kern-1.7pt [}
\newcommand\F{\mbox{I\kern-2pt F}}
\newcommand\cQ{{\mathcal Q}}
\newcommand\cS{{\mathcal S}}
\title[Binary markets under transaction costs]{Binary markets under transaction costs}
\author{Fernando Cordero}
\address{Faculty of Mathematics, University of Vienna, Nordbergstrasse 15, 1090 Vienna, Austria.}
\email{cordiery@gmail.com, fernando.cordero@univie.ac.at}
\author{Irene Klein}
\address{Department of Statistics and Operations Research, University of Vienna, Br\"{u}nnerstrasse 72, 1210 Vienna, Austria.}
\email{irene.klein@univie.ac.at}
\author{Lavinia Ostafe}
\address{Faculty of Mathematics, University of Vienna, Nordbergstrasse 15, 1090 Vienna, Austria.}
\email{lavinia.ostafe@univie.ac.at}
\thanks{The first author gratefully acknowledges financial support from the European Research Council (ERC) under grant agreement No.~247033. The third author gratefully acknowledges financial support from the Austrian 
Science Fund (FWF) under grant P19456 and from the European Research 
Council (ERC) under grant No.~247033.}
\date{\today}%
\begin{document}

\begin{abstract}
The goal of this work is to study  binary market models with transaction costs, and to characterize their arbitrage opportunities. It
has been already shown that the absence of arbitrage is related to the existence of $\la$--consistent price systems ($\lambda$-CPS), and, for this reason,
we aim to provide conditions under which such systems exist. More precisely, we give a characterization for the smallest transaction cost $\la_c$ (called ``critical'' $\la$)
starting from which one can construct a $\la$--consistent price system. We also provide an expression for the set $\Ms(\lambda)$ of all probability measures
inducing $\lambda$-CPS. We show in particular that in the transition phase ``$\lambda=\lambda_c$'' these sets are empty if and only  if the frictionless market
admits arbitrage opportunities. As an application, we obtain an explicit formula for $\lambda_c$ depending only
on the parameters of the model for homogeneous and also for some semi-homogeneous binary markets.
\end{abstract}
\subjclass[2010]{91B24, 60G42}
\keywords{Binary market, transaction costs, consistent price system, arbitrage.}

\maketitle
\section{Introduction}\label{s0}
Intuitively, a binary market is a market in which the stock price process $(S_n)_{n=0}^N$ is an adapted stochastic process with strictly positive values and such
that at time $n$ the stock price evolves from $S_{n-1}$ to either $\alpha_n\, S_{n-1}$ or $\beta_n\, S_{n-1} $, where $\beta_n <\alpha_n$. The values $\alpha_n$ and $\beta_n$
depend only on the past. So there are exactly $2^n$ different possible paths for the stock price to evolve up to time $n$.

The study of binary market models is both interesting and useful in order to obtain more information about the behavior of 
continuous models. This is indeed the case, as a typical situation that may occur is when a continuous model can
be expressed as a limiting process of a sequence of binary market models. Such a construction comes very natural for the Black-Scholes models 
which are driven by a standard Brownian motion. The key point is to approximate, by means of the Donsker theorem, the Brownian motion by a 
random walk consisting of independent Bernoulli random variables with the same parameter.

Moreover, this idea can be extended also to Black-Scholes-type markets that are driven by a process, for which
we dispose of a random walk approximation. Examples of this are the fractional Brownian motion and the Rosenblatt process, as one can see in \cite{Sotti} and \cite{Totu} respectively. 
In these works, the authors construct a sequence of binary models approximating the fractional Black--Scholes (respectively the Rosenblatt Black--Scholes) by giving an 
analogue of the Donsker's theorem, which, in this case, means that the fractional Brownian motion (respectively the Rosenblatt process) can be 
approximated by a ``disturbed'' random walk.

An important feature for a binary market model that one can study is its arbitrage opportunities. In \cite{Dzh} 
Dzhaparidze extensively describes a general mathematical model for the finite binary securities market in which he 
gives a complete characterization of the absence of arbitrage by using ideas of Harrison and Pliska \cite{Hapl}. However, interesting binary 
models admitting arbitrage opportunities can be found in the literature. Indeed, in \cite{Sotti}
 Sottinen showed that the arbitrage persists in the fractional binary markets approximating the fractional Black--Scholes 
 and such an opportunity is explicitly constructed using the path information starting from time zero. An analogous result for Rosenblatt binary markets
 is obtained in \cite{Totu}.
 
 All the above--mentioned results were obtained for a binary market model without transaction costs. 
In the present paper we focus our attention on the study of binary market models under transaction costs 
$\la$ and their arbitrage opportunities. When one introduces transaction 
costs, the usual notion of an equivalent martingale measure that is used in a market without friction is replaced 
by the concept of a $\la$--consistent price system ($\la$--CPS). In this work, we aim to give necessary and sufficient 
condition under which a binary market is ``good'' or not. By ``good'' 
we mean that the parameters of the model are given in such a way that there exist consistent price systems. 
Notice that this is not always the case, as one could see from the above discussion. Therefore, we
characterize the smallest transaction costs $\la_c$ starting from which one can construct a $\la$--CPS. 
By using this characterization, we can obtain an explicit expression for $\la_c$ when the parameters of the model 
are homogeneous in time and space, but also for a large class of 
semi--homogeneous cases, i.e.~when the parameters of the model are not necessarily 
homogeneous in time but they are still homogeneous in space.

The paper is organized as follows. In Section~2, we start introducing some notations and definitions concerning binary markets that we will use along this work. We recall
necessary and sufficient conditions to exclude arbitrage opportunities for these markets in the frictionless case (see \cite{Dzh}). Finally,
we present the notion of $\lambda$-consistent price system and we state the Fundamental Theorem of Asset Pricing which permits to relate the
existence of consistent price systems to the absence of arbitrage opportunities.

In Section~3, we give a brief presentation of the 1-step model, in which all the calculations are explicit. We also show that the results for 1-step models
allow to obtain a lower bound  in the general case.

Section 4 consists of technical lemmas which are used to establish necessary and sufficient conditions for the existence of $\lambda$-CPS.

In Section 5 the main results are concentrated. We start with a characterization for the smallest transaction cost $\la_c$ (called ``critical'' $\la$)
starting from which one can construct a $\la$--consistent price system. In a similar way, we obtain an expression for the set $\Ms(\lambda)$ of all probability measures
inducing $\lambda$-CPS. These results are a consequence of the necessary and sufficient conditions established in Section 4. We finish this section proving that
a binary market with critical transaction costs $\lambda_c$ admits arbitrage if and only if the corresponding frictionless market admits arbitrage.

In Section 6, we apply our results to give an explicit formula for the critical transaction costs $\lambda_c$ for homogeneous and
some semi-homogeneous binary markets.

Even if the binary models in the setting of no transaction costs were already studied in the literature, 
there is no result which gives us the conditions under which there exist 
consistent price systems when one passes to the case of transaction costs. This is precisely the goal of this 
paper.

\section{Preliminaries}\label{s1}

\subsection{Definitions}
To formalize the notion of a binary market, we introduce first some notations which will be useful along to this work. For a detailed treatment of this subject see \cite{Dzh} and Section II.1e of \cite{Shir}.

\subsubsection{The market model}
Let $(\Omega,\Fs,{(\Fs_n)}_{n=0}^N, P)$ be a finite filtered probability space. By a binary market we mean a market in which two
assets (a bond $B$ and a stock $S$) are traded at successive times $t_0=0<t_1<\cdots<t_N$. The evolution of the bond and stock is
described by:
$$B_n=(1+r_n)B_{n-1}$$
and
\begin{equation}\label{stock}
 S_n= \left(a_n+(1+X_n)\right)\,S_{n-1},\quad \forall n\in\{1,...,N\},
\end{equation}
where $r_n$ and $a_n$ are the interest rate and the drift of the stock in the time interval $[t_n,t_{n+1})$. The value of $S$ at time
$0$ is given by:
$$S_0=s_0=1+a_0+x_0.$$
We may assume, for the sake of simplicity, that the bond plays the role of a num\'eraire, and, in this case, that it
is equal to $1$ at every time $n$.
The process $(X_n)_{n=0}^N$ is an adapted stochastic process starting at
$X_0=x_0$ and such that, at each time $n$, $X_n$ can take only two possible values $u_n$ and $d_n$ with $d_n<u_n$. While $a_n$ from \eqref{stock}
is deterministic, the values of $u_n$ and $d_n$ may depend on the path of $X$ up to time $n-1$. This means that if, for each $n>1$, we denote by
$\vec{X}_{n-1}=(X_{n-1},...,X_0)$ and by
$$E_{n-1}=\{\vec{X}_{n-1}(\omega)\ :\ \omega\in\Omega\}$$
the set of all possible paths up to time $n-1$, then $X_n\in\{u_n(\vec{X}_{n-1}),d_n(\vec{X}_{n-1})\}$. For $n=1$, we have that $E_0=\{x_0\}$. \\
\begin{center}
\begin{tikzpicture}[grow=right]
\tikzstyle{level 0}=[rectangle,rounded corners, draw,level distance=10mm]
\tikzstyle{level 1}=[rectangle,rounded corners, draw,level distance=20mm, sibling distance=16mm]
\tikzstyle{level 2}=[rectangle,rounded corners, draw,level distance=25mm, sibling distance=8mm]
\node[level 0] {\tiny{$x_0$}}
child{node[level 1]{\tiny{$d_1(x_0)$}}
  child{node[level 2]{\tiny{$d_2(d_1(x_0),x_0)$}}}
  child{node[level 2]{\tiny{$u_2(d_1(x_0),x_0)$}}}
}
child{node[level 1]{\tiny{$u_1(x_0)$}}
  child{node[level 2]{\tiny{$d_2(u_1(x_0),x_0)$}}}
  child{node[level 2]{\tiny{$u_2(u_1(x_0),x_0)$}}}
};
\node at (0,-2){\minibox{$X_0$}};
\node at (2,-2) {\minibox{$X_1$}};
\node at (4.5,-2) {\minibox{$X_2$}};
\end{tikzpicture}
\end{center}
Now, we put for each $y\in E_{n-1}$:
$$\alpha_n(y)= 1+a_n+u_n(y)\quad\textrm{and}\quad\beta_n(y)=1+a_n+d_n(y),$$
and we assume that $\alpha_n(y)$ and $\beta_n(y)$ are strictly positive for every $y\in E_{n-1}$.\\
\begin{hyp}\label{ass}
We assume in addition that:
\begin{itemize}
\item The filtration ${(\Fs_n)}_{n=0}^N$ coincides with the natural filtration of $(X_n)_{n=0}^N$.
\item For all $\omega\in\Omega$, $\{\omega\}\in\Fs_N$.
\item  For all $\omega\in\Omega$, $P(\{\omega\})>0$.
\end{itemize}
\end{hyp}
\begin{remark}
The first two conditions of Assumptions~\ref{ass} allow to identify the spaces $\Omega$ and $E_N$ as well as the spaces of probability measures $\Ps_1(\Omega)$ and $\Ps_1(E_N)$.
We can also identify $\Ps_1(\Omega)$ with:
$$ {[0,1]}_*^{2^N-1} =\left\{\left(q_n(\Qs,x):1\leq n\leq N,\, x\in E_{n-1}\right): q_n(x)\in[0,1]\right\}$$
by means of the relation:
\begin{equation}\label{qn}
q_n(\Qs,y):=\Qs\left(X_n=u_n(y)\Big|\,\vec{X}_{n-1}=y\right)
\end{equation}
\begin{center}
\begin{tikzpicture}[grow=right]
\tikzstyle{level 0}=[rectangle,rounded corners, draw,level distance=20mm]
\tikzstyle{level 1}=[rectangle,rounded corners, draw,level distance=30mm, sibling distance=20mm]
\node[level 0] {\small{$y$}}
child{node[level 1]{\small{$d_n(y)$}}}
child{node[level 1]{\small{$u_n(y)$}}
edge from parent node[fill=white] {$q_n(\Qs,y)$}};
\end{tikzpicture}
\end{center}
\vspace{.2cm}
When there is no risk of confusion we write $q_n(y)$ instead of $q_n(\Qs,y)$.\\
We are using the notation ${[\ ,\ ]}_*$ to emphasize that the coordinates of a vector will be associated to nodes in the tree. Thus, for example, when we speak
of continuity of a function in $\Ps_1(\Omega)$, we refer to the continuity of the function viewed as a function in ${[0,1]}_*^{2^N-1}$, so coordinate by coordinate.
More precisely, we can define the metric $d_\infty$ in $\Ps_1(\Omega)$ as:
$$d_\infty(\Qs,\widehat{\Qs})=\max_{n\in\{1,...,N\}}\left\{\max_{x\in E_{n-1}}|q_n(\Qs,x)-q_n(\widehat{\Qs},x)|\right\}.$$
\end{remark}
\begin{remark}\label{eqprob}
The last condition of Assumption~\ref{ass} implies that:
$$\Qs\sim P \Longleftrightarrow\Qs(\{\omega\})>0,\textrm{ for all }\omega\in\Omega.$$
\end{remark}

\subsubsection{Notations on the binary tree}
In order to simplify the notations, we introduce the operators extension ``$\star u$'' and ``$\star d$'' acting on the nodes of the binary tree of the paths of the process $X$.

For $n\in\{1,...,N\}$, $y=(y_{n-1},...,y_0)\in E_{n-1}$, we define:

\begin{itemize}
\item[]$y\star u^0=y\quad\textrm{and}\quad y\star d^0=y.$
\item[] $y\star u=(u_n(y),y)\quad\textrm{and}\quad y\star d=(d_n(y),y).$
\item[] $y\star u^{i+1}=(y\star u^i)\star u\quad\textrm{and}\quad y\star d^{i+1}=(y\star d^i)\star d\,;\quad$ for $i\in\{0,..., N-n\}$.
\end{itemize}

\vspace{.3cm}
\begin{center}
\begin{tikzpicture}[grow=right]
\tikzstyle{level 0}=[rectangle,rounded corners, draw,level distance=10mm]
\tikzstyle{level 1}=[rectangle,rounded corners, draw,level distance=20mm, sibling distance=20mm]
\tikzstyle{level 2}=[rectangle,rounded corners, draw,level distance=25mm, sibling distance=10mm]
\node[level 0] {\small{$y$}}
child{node[level 1]{\small{$y\star d$}}
  child{node[level 2]{\small{$y\star d^2$}}}
  child{node[level 2]{\small{$(y\star d)\star u$}}}
}
child{node[level 1]{\small{$y\star u$}}
  child{node[level 2]{\small{$(y\star u)\star d$}}}
  child{node[level 2]{\small{$y\star u^2$}}}
};
\end{tikzpicture}

\end{center}

\subsection{No arbitrage condition in the frictionless case}\label{ssnac}
We know by Proposition 3.6.2 in \cite{Dzh} that a binary market excludes arbitrage opportunities if and only if for all $n\in\{1,...,N\}$
and $y\in E_{n-1}$, we have:
\begin{equation}\label{nac1}
d_n(y)<-a_n< u_n(y),
\end{equation}
or equivalently:
\begin{equation}\label{nac2}
\beta_n(y)<1< \alpha_n(y).
\end{equation}
This is related to the existence of a probability measure $\Qs^0$ equivalent to $P$ such that
${(S_n)}_{n=0}^N$ is a $\Qs^0$-martingale. It is easy to see that such $\Qs^0$ must satisfy for each $n\in\{1,...,N\}$ and $x\in E_{n-1}$:
\begin{equation}\label{eqme}
\Qs^0\left(X_n=u_n\left(\vec{X}_{n-1}\right)\Big\arrowvert\vec{X}_{n-1}=x\right)= \frac{-a_n-d_n(x)}{u_n(x)-d_n(x)}=\frac{1-\beta_n(x)}{\alpha_n(x)-\beta_n(x)}.
\end{equation}
Moreover, under condition \eqref{nac1}, identity \eqref{eqme} defines a unique equivalent martingale measure. See Chapter 3 of \cite{Dzh} for more details.
\subsection{\texorpdfstring{Transaction costs and $\lambda$-CPS}{}}
Now, we introduce proportional transaction costs $\lambda\in]0,1[$ in our binary market $S$, which means that the bid and ask price of the stock $S$ are modeled by the processes
${((1-\lambda)S_n)}_{n=0}^N$ and ${(S_n)}_{n=0}^N$ respectively. In this framework the notion of consistent price system
replaces the notion of equivalent martingale measure that is used in a market without transaction costs, and, one can relate the absence of
arbitrage to the existence of such systems.
\begin{definition}[$\lambda$-consistent price system]
 A $\lambda$-consistent price system ($\lambda$-CPS) for the binary market $S$ is a pair $(\Qs,\tilde{S})$ of a probability measure
$\Qs\sim P$ and a process ${(\tilde{S}_n)}_{n=0}^N$ which is a martingale under $\Qs$ such that:
\begin{equation}\label{CPSd}
(1-\lambda)S_n\leq \tilde{S}_n\leq S_n,\qquad \textrm{a.s., for all $n\in\{0,...,N\}$}.
\end{equation}
We denote by $\cS^{\la}$ the set of $\lambda$-CPS.
\end{definition}

The following Theorem relates the existence of consistent price systems to the absence of arbitrage. A proof for it can be found, for
example, in \cite{Sch}.
\begin{theorem}[Fundamental Theorem of Asset Pricing in the case of finite $\Omega$]\label{TFAP}
Given a stock price process $S=(S_n)_{n=0}^N$ on a finite probability space and transaction costs $0<\la<1$, the following are equivalent:
\begin{enumerate}
 \item The process $S$ does not allow for an arbitrage under transaction costs $\la$.
 \item $\cS^{\la}\neq \emptyset$.
\end{enumerate}
\end{theorem}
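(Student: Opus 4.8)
The statement is a finite-dimensional instance of the Fundamental Theorem of Asset Pricing under proportional transaction costs, so the plan is to prove the two implications separately: $(2)\Rightarrow(1)$ by using the consistent price as a shadow price, and $(1)\Rightarrow(2)$ by a separating-hyperplane argument in $\R^\Omega$, where the finiteness of $\Omega$ does all the heavy lifting.

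For $(2)\Rightarrow(1)$ I would argue as follows. Suppose a $\la$-CPS $(\Qs,\tilde S)$ exists and let $V_N$ be any terminal position in the num\'eraire attainable from zero endowment by self-financing trading under transaction costs $\la$. Because under \eqref{CPSd} one buys stock at the ask $S_n\ge\tilde S_n$ and sells at the bid $(1-\la)S_n\le\tilde S_n$, marking every trade with the single frictionless price $\tilde S$ can only increase wealth; hence $V_N$ is dominated by the terminal value of a self-financing frictionless strategy for the price process $\tilde S$ started from zero. Since $\tilde S$ is a $\Qs$-martingale, the latter has zero $\Qs$-expectation, so $\E_\Qs[V_N]\le0$. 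If $V_N$ were an arbitrage, i.e.\ $V_N\ge0$ a.s.\ and $V_N>0$ with positive probability, then $\Qs\sim P$ together with Remark~\ref{eqprob} would force $\E_\Qs[V_N]>0$, a contradiction.

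For $(1)\Rightarrow(2)$ I would pass to the finite-dimensional space $\R^\Omega$ (identified with $\R^{E_N}$ by Assumption~\ref{ass}) and introduce the convex cone $\cA_\la\subset\R^\Omega$ of all terminal num\'eraire positions attainable from zero endowment under transaction costs $\la$. The no-arbitrage hypothesis is exactly the statement $\cA_\la\cap\R^\Omega_+=\{0\}$. The first step is to verify that $\cA_\la$ is closed; in the finite-$\Omega$ setting this is the comparatively routine consequence of no arbitrage obtained by extracting from any convergent sequence of attainable claims a nonzero limiting trading direction and observing that, were its payoff nonnegative, it would itself be an arbitrage. Given closedness, the compact simplex $\De=\{v\in\R^\Omega_+:\sum_\omega v(\omega)=1\}$ is disjoint from $\cA_\la$, so the finite-dimensional separation theorem yields a linear functional $\ell$ with $\ell\le0$ on the cone $\cA_\la$ and $\ell>0$ on $\De$; in particular $\ell(e_\omega)>0$ for every $\omega$, which is precisely the strict positivity needed to obtain an \emph{equivalent} (not merely absolutely continuous) measure, and this is where $P(\{\omega\})>0$ and the finiteness of $\Omega$ are used.

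It then remains to read the pair $(\Qs,\tilde S)$ off the functional. I would set $\Qs(\{\omega\})=\ell(e_\omega)/\sum_{\omega'}\ell(e_{\omega'})$, so that $\Qs\sim P$, and define $\tilde S$ from the prices that $\ell$ assigns to the elementary trading directions ``buy/sell one unit of stock at time $n$ in state $y$''. Nonpositivity of $\ell$ on the ``buy at the ask'' and ``sell at the bid'' directions forces the sandwich $(1-\la)S_n\le\tilde S_n\le S_n$ of \eqref{CPSd}, while nonpositivity on the buy-and-hold and sell-and-hold directions forces $\tilde S$ to be a $\Qs$-martingale. The main obstacle I anticipate is exactly this last translation: organizing the dual description of $\cA_\la$ so that the sandwich inequalities and the martingale property drop out cleanly from $\ell\le0$; the closedness of $\cA_\la$, while the genuine difficulty in general settings (infinite horizon or non-finite $\Omega$), is here disposed of by finiteness.
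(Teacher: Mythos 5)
The paper does not prove Theorem~\ref{TFAP} at all: it is quoted as a known result with the proof deferred to the cited reference \cite{Sch}, and your outline is essentially that standard argument (shadow-price/supermartingale reasoning for $(2)\Rightarrow(1)$, separation of the cone of attainable claims from the positive orthant plus a dual reading-off of $(\Qs,\tilde S)$ for $(1)\Rightarrow(2)$), so there is nothing in the paper to diverge from. Your sketch is correct in outline; the only remark worth making is that for finite $\Omega$ and finite horizon the cone $\cA_\la$ is generated by finitely many elementary trade directions together with free disposal, hence is polyhedral and closed automatically, so the ``limiting direction'' argument you invoke (which is what one needs in more general settings) is not required here.
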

Now, define $\Ms(\lambda)$, the set of all probability measures $\Qs\sim P$ inducing a $\lambda$-CPS, that is:
$$\Ms(\lambda)=\left\{ \Qs\sim P:\,\exists\, \tilde{S}\textrm{ such that }(\Qs,\tilde{S}) \textrm{ is a $\lambda$-CPS}\right\}.$$
One of the goals of this work is to characterize these sets. The other goal is to characterize the critical transaction costs $\lambda_c$, starting from which
the arbitrage opportunities disappear. Using Theorem~\ref{TFAP}, we can express $\lambda_c$ as:
$$\lambda_c=\inf\{\lambda\in[0,1]: \textrm{s.t. }\exists\ \lambda-\textrm{CPS for}\,\, (S,P)\}.$$
By definition, when we assume that the binary market model with $0$ transaction costs excludes arbitrage opportunities, then $\la_c=0$.
\section{\texorpdfstring{The 1-step model and a general lower bound for $\lambda_c$}{}}
We start this paragraph by analyzing the $1$-step model ($N=1$) in which we can explicitly find an easy expression for $\la_c$.
Indeed, if we assume that for some $\la$ there exists a $\la$--CPS $(\Qs,\widetilde{S})$, then, by the martingale property of $\widetilde{S}$ and the
inequality \eqref{CPSd}, we obtain that
\begin{equation}\label{ineq}
0\vee\left(\frac{1-\la-\beta_1(x_0)}{\al_1(x_0)-\beta_1(x_0)}\right)\leq\Qs(X_1=u_1(x_0))\leq 1\wedge\left(\frac{\frac1{1-\la}-\beta_1(x_0)}{\al_1(x_0)-\beta_1(x_0)}\right).
\end{equation}
Using Remark~\ref{eqprob}, it directly follows that
$$\lambda>1-\alpha_1(x_0)\ \ \mathrm{and}\ \ \lambda>1-\frac{1}{\beta_1(x_0)}.$$
In the other direction, if we start with some transaction costs $\la$ given as above, then we can choose a probability
measure $\cQ$ satisfying \eqref{ineq}, and, hence, find a process $\widetilde{S}$ which is a $\cQ$--martingale and satisfies \eqref{CPSd}.
The argument and a more detailed presentation of this example can be found in \cite{Sch}.
We therefore have that:
\begin{equation}\label{r1s}
\lambda_c^{(1)}=1-\alpha_1(x_0)\wedge\frac{1}{\beta_1(x_0)}\wedge 1.
\end{equation}
However, we will see in the next sections that it is more complicated to construct a $\lambda$-CPS for a general $N$-step model than to
construct $\lambda$-CPS for each 1-step sub-binary market and then to paste them together. Even so, this naive idea permits to give a lower bound
for the critical transaction costs $\lambda_c$.
\begin{proposition}[Lower bound for $\lambda_c$]\label{plblc} We have that:
$$\lambda_c\geq\lambda_*=1-\min\limits_{n\in\{1,...,N\}}\left\{\min\limits_{x\in E_{n-1}}\left\{\alpha_n(x)\wedge\frac{1}{\beta_n(x)}\wedge 1\right\}\right\}.$$
\end{proposition}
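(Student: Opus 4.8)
The plan is to prove the equivalent statement that every $\la$ for which a $\la$-CPS exists must satisfy $\la\ge\la_*$; passing to the infimum over all such $\la$ then yields $\la_c\ge\la_*$ directly from the definition of $\la_c$. The mechanism is to \emph{localize} the $N$-step constraint to a single node of the tree and reuse the one-step computation behind \eqref{ineq} and \eqref{r1s}. Accordingly, I would fix an arbitrary $\la$-CPS $(\Qs,\widetilde{S})$ and an arbitrary node $y\in E_{n-1}$, and aim to show that this node already forces $\la>1-\al_n(y)$ and $\la>1-1/\be_n(y)$.

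First I would condition on the atom $\{\vec X_{n-1}=y\}$, which has positive probability since $\Qs\sim P$. On this atom $\widetilde{S}_{n-1}$ is constant, say $\widetilde{S}_{n-1}(y)$, while $\widetilde{S}_n$ takes only the two values $\widetilde{S}^u(y)$ and $\widetilde{S}^d(y)$ attached to $X_n=u_n(y)$ and $X_n=d_n(y)$. The martingale property of $\widetilde{S}$ restricted to this atom becomes
\[
\widetilde{S}_{n-1}(y)=q_n(y)\,\widetilde{S}^u(y)+(1-q_n(y))\,\widetilde{S}^d(y),
\]
where $q_n(y)=q_n(\Qs,y)\in(0,1)$ \emph{strictly}, because $\Qs\sim P$ together with Assumption~\ref{ass} forces both successors of $y$ to carry positive conditional probability. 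Writing $S_{n-1}(y)$ for the stock value at $y$, the consistency inequalities \eqref{CPSd} read $(1-\la)S_{n-1}(y)\le \widetilde{S}_{n-1}(y)\le S_{n-1}(y)$, together with $(1-\la)\al_n(y)S_{n-1}(y)\le \widetilde{S}^u(y)\le \al_n(y)S_{n-1}(y)$ and $(1-\la)\be_n(y)S_{n-1}(y)\le \widetilde{S}^d(y)\le \be_n(y)S_{n-1}(y)$.

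From here the two bounds drop out by comparing the martingale identity against these box constraints. Inserting the upper bounds on $\widetilde{S}^u(y),\widetilde{S}^d(y)$ and the lower bound on $\widetilde{S}_{n-1}(y)$, then dividing by $S_{n-1}(y)>0$, I obtain $1-\la\le q_n(y)\al_n(y)+(1-q_n(y))\be_n(y)$; since $q_n(y)<1$ and $\be_n(y)<\al_n(y)$, the right-hand side is strictly below $\al_n(y)$, whence $\la>1-\al_n(y)$. Symmetrically, using the lower bounds on $\widetilde{S}^u(y),\widetilde{S}^d(y)$ and the upper bound $\widetilde{S}_{n-1}(y)\le S_{n-1}(y)$ gives $(1-\la)\bigl(q_n(y)\al_n(y)+(1-q_n(y))\be_n(y)\bigr)\le 1$, and now $q_n(y)>0$ makes the bracket strictly exceed $\be_n(y)$, so $(1-\la)\be_n(y)<1$, i.e.\ $\la>1-1/\be_n(y)$. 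Combining these with the trivial $\la\ge0$ yields $\la\ge 1-\bigl(\al_n(y)\wedge\tfrac{1}{\be_n(y)}\wedge1\bigr)$ at every node; taking the maximum over the finitely many nodes gives $\la\ge\la_*$, and the infimum over admissible $\la$ finishes the proof.

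I expect no serious obstacle, since the argument is a node-by-node reduction to the already-solved one-step model; the only point requiring care is the \emph{strictness} $q_n(y)\in(0,1)$, which is exactly what upgrades the weak box inequalities into the strict comparisons $\la>1-\al_n(y)$ and $\la>1-1/\be_n(y)$ and, via Remark~\ref{eqprob}, is guaranteed by $\Qs\sim P$. One should also note that the $\wedge1$ in $\la_*$ is automatic: at nodes where the frictionless no-arbitrage condition \eqref{nac2} holds, both $1-\al_n(y)$ and $1-1/\be_n(y)$ are negative, so such a node contributes only the harmless bound $\la\ge0$.
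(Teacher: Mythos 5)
Your proof is correct and follows essentially the same route as the paper: restrict the given $\la$-CPS to each $1$-step sub-market at a node $y\in E_{n-1}$ and invoke the one-step bounds behind \eqref{ineq} and \eqref{r1s} to get $\la\ge 1-\bigl(\al_n(y)\wedge\tfrac{1}{\be_n(y)}\wedge 1\bigr)$ node by node. The only difference is that you write out the one-step martingale-plus-box-constraint computation explicitly where the paper simply cites its Section~3 discussion of the $1$-step model.
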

\begin{proof}
If we take $\lambda>\lambda_c$, then there exists a $\lambda$-CPS $(\Qs,\tilde{S})$. We divided the $N$-step binary market in $2^N-1$ $1$-step binary markets.The restriction of $(\Qs,\tilde{S})$ to each
one of this binary markets is also a $\lambda$-CPS. By using the results for the $1$-step binary markets (equation \eqref{r1s}), we obtain that $\lambda>\lambda_*$.
\end{proof}
\begin{remark}
If we assume that $\la_c=0$, then $\al_n(x)\geq1\geq\beta_n(x)$ for all $n\in\{1,...,N\}$ and $x\in E_{n-1}$, and, by definition, the arbitrage
opportunities disappear when we introduce arbitrarily small transaction costs. If in addition $\al_n(x)>1>\beta_n(x)$ for all $n\in\{1,...,N\}$ and
$x\in E_{n-1}$, then there are no arbitrage opportunities in the frictionless market.\\
\end{remark}

\section{\texorpdfstring{Necessary and sufficient conditions on the measures inducing $\lambda$-CPS}{}}\label{s2}

In this section we study necessary and sufficient conditions for a probability measure to be in $\Ms(\lambda)$. This is the starting point to understand
the nature of the $\lambda$-CPS for binary markets. We will see how the martingale property imposes constraints in the bid-ask spread intervals, and how to
deduce from these constraints a necessary condition to belong to $\Ms(\lambda)$, which turns out to be also sufficient.
\subsection{\texorpdfstring{Effective bid-ask spread of $S$}{}}
The goal of this paragraph is to show that if $(\Qs, \tilde{S})$ is a $\lambda$-CPS for the process $S$, then $\tilde{S}$ verifies a condition which is, in general,
stronger than \eqref{CPSd}.\\

\noindent To make this idea clear, we introduce the next lemma, which shows that, by using the properties of the
$\lambda$--CPS, property \eqref{CPSd} at time $n$ implies a more restrictive condition at time $n-1$.
\begin{lemma}\label{ab}
 Let $\lambda\in[0,1]$ and $(\cQ,\tilde{S})\in\cS^{\lambda}$. If, for $n>1$ and $y\in E_{n-1}$, there exists
 $a,b,\tilde{a},\tilde{b}$ strictly positive such that
\begin{equation}\label{yu}
  \frac{\tilde{S}_n(y\star u)}{S_n(y\star u)}\in\left[(1-\lambda)a,b\right]\quad\textrm{and}\quad \frac{\tilde{S}_n(y\star d)}{S_n(y\star d)}\in\left[(1-\lambda)\tilde{a},\tilde{b}\right],
\end{equation}
then
$$
\frac{\tilde{S}_{n-1}(y)}{S_{n-1}(y)}\in\left[(1-\lambda)(\overline{a}\vee1), \overline{b}\wedge1\right]
$$
where
$$\overline{a}=q_n(y)\alpha_n(y)a+(1-q_n(y))\beta_n(y)\tilde{a}$$
and
$$\overline{b}=q_n(y)\alpha_n(y)b+(1-q_n(y))\beta_n(y)\tilde{b}.$$
\end{lemma}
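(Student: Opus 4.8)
The plan is to translate the two ratio bounds into bounds on the normalized price $\tilde{S}_{n}/S_{n-1}$ at the two successor nodes, take the $\Qs$-conditional expectation at node $y$ (which equals $\tilde{S}_{n-1}(y)$ by the martingale property), and finally combine the resulting estimate with the direct $\lambda$-CPS constraint \eqref{CPSd} at node $y$ itself. Writing $R_{n-1}(y)=\tilde{S}_{n-1}(y)/S_{n-1}(y)$ and similarly at the successors, the key observation is that $S_n(y\star u)=\alpha_n(y)S_{n-1}(y)$ and $S_n(y\star d)=\beta_n(y)S_{n-1}(y)$, so the hypotheses \eqref{yu} read
\[
\tilde{S}_n(y\star u)\in\bigl[(1-\lambda)a\,\alpha_n(y),\,b\,\alpha_n(y)\bigr]S_{n-1}(y),
\qquad
\tilde{S}_n(y\star d)\in\bigl[(1-\lambda)\tilde{a}\,\beta_n(y),\,\tilde{b}\,\beta_n(y)\bigr]S_{n-1}(y).
\]

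**The martingale step.** By the martingale property of $\tilde{S}$ under $\Qs$ and the definition \eqref{qn} of $q_n(y)$,
\[
\tilde{S}_{n-1}(y)=q_n(y)\,\tilde{S}_n(y\star u)+(1-q_n(y))\,\tilde{S}_n(y\star d).
\]
I would take the convex combination of the two intervals above with weights $q_n(y)$ and $1-q_n(y)$. Since these weights are nonnegative, the lower endpoints combine to $(1-\lambda)\bigl(q_n(y)\alpha_n(y)a+(1-q_n(y))\beta_n(y)\tilde{a}\bigr)S_{n-1}(y)=(1-\lambda)\overline{a}\,S_{n-1}(y)$ and the upper endpoints combine to $\overline{b}\,S_{n-1}(y)$. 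Dividing through by $S_{n-1}(y)>0$ yields the intermediate bound
\[
R_{n-1}(y)=\frac{\tilde{S}_{n-1}(y)}{S_{n-1}(y)}\in\bigl[(1-\lambda)\overline{a},\,\overline{b}\bigr].
\]

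**Closing with the CPS constraint at $y$.** The remaining step is to sharpen this to $[(1-\lambda)(\overline{a}\vee 1),\,\overline{b}\wedge 1]$, which is the point where the pair $(\Qs,\tilde{S})$ being a genuine $\lambda$-CPS — not merely a $\Qs$-martingale — gets used. Applying \eqref{CPSd} directly at node $y$ gives $(1-\lambda)\leq R_{n-1}(y)\leq 1$. Intersecting the interval $[(1-\lambda)\overline{a},\overline{b}]$ with $[(1-\lambda),1]$ produces the lower bound $\max\{(1-\lambda)\overline{a},\,1-\lambda\}=(1-\lambda)(\overline{a}\vee 1)$ and the upper bound $\min\{\overline{b},1\}=\overline{b}\wedge 1$, as claimed. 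I expect the only genuinely delicate point to be the bookkeeping in this final intersection: one must confirm that the two intervals actually overlap so that the claimed interval is nonempty, but this is automatic since $R_{n-1}(y)$ itself lies in both, so no emptiness issue arises. The rest is the routine convexity argument above, and no further hypotheses beyond the $\lambda$-CPS property and the strict positivity of the prices are needed.
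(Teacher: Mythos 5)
Your proof is correct and follows essentially the same route as the paper's: reduce to the intermediate bound $\tilde{S}_{n-1}(y)/S_{n-1}(y)\in[(1-\lambda)\overline{a},\overline{b}]$ via the martingale identity at $y$, then intersect with the direct constraint \eqref{CPSd} at $y$ to get the $\vee 1$ and $\wedge 1$. Your derivation of the intermediate bound as a direct convex combination is in fact a little cleaner than the paper's, which obtains the same estimate by solving the martingale equation for $\tilde{S}_n(y\star d)$ and chaining the resulting inequalities.
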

\begin{proof}
Let $n>1$ and $y\in E_{n-1}$ such that \eqref{yu} hold true. It is enough to prove that
$\frac{\tilde{S}_{n-1}(y)}{S_{n-1}(y)}\in\left[(1-\lambda)\overline{a}, \overline{b}\right]$. Indeed, if this
would be the case, then the desired result follows from the fact that
$(1-\lambda)S_{n-1}(y)\leq \tilde{S}_{n-1}(y)\leq S_{n-1}(y)$.\\
By the martingale property, we obtain that:
\begin{equation}\label{eqmk}
 \tilde{S}_{n}(y\star d)=\frac{\tilde{S}_{n-1}(y)- q_{n}(y)\,\tilde{S}_{n}(y\star u)}{1-q_{n}(y)},
\end{equation}
\noindent which, combined with \eqref{yu}, gives us that:
\begin{equation}\label{eqpk1}
\frac{\tilde{S}_{n-1}(y)-(1-q_{n}(y))\tilde{b}S_{n}(y\star d)}{q_{n}(y)} \leq\tilde{S}_{n}(y\star u)
\end{equation}
\noindent and
\begin{equation}\label{eqpk2}
\tilde{S}_{n}(y\star u)\leq \frac{\tilde{S}_{n-1}(y)- (1-\lambda)(1-q_{n}(y))\tilde{a}S_{n}(y\star d)}{q_{n}(y)}
\end{equation}
Then, \eqref{eqpk1} together with \eqref{yu} implies that the left hand side of \eqref{eqpk1} is
smaller or equal than $bS_{n}(y\star u)$. As well, \eqref{eqpk2} combined with
\eqref{yu} implies that the right hand side of \eqref{eqpk2} is bigger or equal than
$(1- \lambda)aS_{n}(y\star u)$.
From this and by using that $S_{n}(y\star u)=\alpha_{n}(y)S_{n-1}(y)$ and that
$S_{n}(y\star d)=\beta_{n}(y)S_{n-1}(y)$, we obtain that:
\begin{equation*}
(1-\lambda)\, \overline{a}\leq \frac{\tilde{S}_{n-1}(y)}{S_{n-1}(y)}\leq \overline{b}.
\end{equation*}
\end{proof}

Now, starting from the result presented in the above lemma, but iterated for every time point,
we introduce, for each $n\in\{1,...,N+1\}$, the functions $\rho_n^+$ and $\rho_n^-$ as follows.

The functions $\rho_{N+1}^+,\rho_{N+1}^-: E_{N}\rightarrow \Rb_+$ are defined by putting:
$$\rho_{N+1}^+=\rho_{N+1}^-\equiv 1.$$
For $n\in\{1,...,N\}$, the functions $\rho_{n}^+,\rho_{n}^-:\Ps_1(\Omega)\times E_{n-1}\rightarrow \Rb_+$ are defined by means of a backward recurrence relation.
More precisely, for each $\Qs\in\Ps_1(\Omega)$ and $x\in E_{n-1}$, we put:
$$\rho_{n}^+(\Qs,x)=1\wedge\left[\,q_{n}(x)\,\alpha_{n}(x)\,\rho_{n+1}^+(\Qs,x\star u)+(1-q_{n}(x))\,\beta_{n}(x)\,\rho_{n+1}^+(\Qs,x\star d)\right],$$
and
$$\rho_{n}^-(\Qs,x)=1\vee\left[\,q_{n}(x)\,\alpha_{n}(x)\,\rho_{n+1}^-(\Qs,x\star u)+(1-q_{n}(x))\,\beta_{n}(x)\,\rho_{n+1}^-(\Qs,x\star d)\right].$$
For $n=N$ we need to replace $(\Qs,x\star\cdot)$ by $x\star\cdot$ in this recurrence relation.\\

In the following proposition, we establish that, as expected according to its construction, the quantities
$(1-\lambda)\rho_{n+1}^-(\Qs,y)S_n(y)$ and $\rho_{n+1}^+(\Qs,y)S_n(y)$
represent the extremities of the effective bid-ask spread interval at the time $n$ at the position $y$.

\begin{proposition}[Effective bid-ask spread of $S$]\label{pcn}
  If $\lambda\in[0,1]$ and $(\Qs,\tilde{S})\in\Ss^\lambda$, then, for each $n\in\{0,...,N\}$ and $y\in E_{n}$:
$$\frac{\tilde{S}_{n}(y)}{S_{n}(y)}\in\left[(1-\lambda)\rho_{n+1}^-(\Qs,y),\rho_{n+1}^+(\Qs,y)\right].$$
\end{proposition}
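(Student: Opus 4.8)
The plan is to argue by backward induction on $n$, decreasing from $N$ down to $0$, using Lemma~\ref{ab} as the inductive step. The recurrences defining $\rho_n^+$ and $\rho_n^-$ are engineered precisely so that the conclusion of Lemma~\ref{ab} at index $n$ reproduces the asserted inclusion one time step earlier, so the whole proof amounts to matching the lemma's output against these recurrences.

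For the base case $n=N$, the definition $\rho_{N+1}^+=\rho_{N+1}^-\equiv 1$ turns the asserted inclusion into $\tilde{S}_N(y)/S_N(y)\in[(1-\lambda),1]$, which is exactly the defining inequality \eqref{CPSd} of a $\lambda$-CPS at time $N$; hence it holds for every $y\in E_N$. For the inductive step, I would assume the inclusion at time $n$ for all nodes of $E_n$, fix $y\in E_{n-1}$, and apply this hypothesis to the two successors $y\star u$ and $y\star d$. This gives precisely the two bracket conditions \eqref{yu} required by Lemma~\ref{ab}, under the identifications $a=\rho_{n+1}^-(\Qs,y\star u)$, $b=\rho_{n+1}^+(\Qs,y\star u)$, $\tilde{a}=\rho_{n+1}^-(\Qs,y\star d)$, $\tilde{b}=\rho_{n+1}^+(\Qs,y\star d)$. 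Before invoking the lemma I would verify strict positivity of these four numbers: one has $\rho_m^-\geq 1>0$ at every index $m$ directly from the outer $1\vee(\cdot)$, while $\rho_m^+>0$ follows by the same backward induction, since $\alpha_m,\beta_m>0$ and $q_m(y)\in(0,1)$ (a consequence of $\Qs\sim P$ via Remark~\ref{eqprob}), so that both terms in the combination defining $\rho_m^+$ are strictly positive. With positivity secured, Lemma~\ref{ab} yields
\[
\frac{\tilde{S}_{n-1}(y)}{S_{n-1}(y)}\in\left[(1-\lambda)(\overline{a}\vee 1),\,\overline{b}\wedge 1\right],
\]
where $\overline{a}$ and $\overline{b}$ are exactly the bracketed quantities appearing in the recurrences for $\rho_n^-$ and $\rho_n^+$. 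Therefore $\overline{a}\vee 1=\rho_n^-(\Qs,y)$ and $\overline{b}\wedge 1=\rho_n^+(\Qs,y)$, which is the assertion of the proposition at time $n-1$, closing the induction. I would add a short remark that the final descent from time $1$ to time $0$ uses Lemma~\ref{ab} with $n=1$ and $E_0=\{x_0\}$; the proof of that lemma, resting only on the martingale identity \eqref{eqmk} and the relations $S_n(y\star u)=\alpha_n(y)S_{n-1}(y)$, $S_n(y\star d)=\beta_n(y)S_{n-1}(y)$, applies verbatim in this boundary case.

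There is in truth no conceptual obstacle: the lemma and the recurrences were tailored to dovetail, so the argument is a matched backward induction. The only points needing genuine care are the verification of the strict-positivity hypothesis of Lemma~\ref{ab} (handled through $\Qs\sim P$), the cosmetic index shift $\rho_{(n-1)+1}=\rho_n$, and the switch in the recurrence between $n=N$ (where $\rho_{N+1}^\pm$ live on $E_N$ and are independent of $\Qs$) and $n<N$. I expect the bookkeeping of these identifications, rather than anything conceptual, to be the most error-prone part of the write-up.
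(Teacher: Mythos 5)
Your proof is correct and follows essentially the same route as the paper: a backward induction whose base case is \eqref{CPSd} at time $N$ together with $\rho_{N+1}^\pm\equiv 1$, and whose inductive step is exactly Lemma~\ref{ab} applied with $a,b,\tilde a,\tilde b$ taken to be the $\rho_{n+1}^\pm$ values at the two successor nodes. The only difference is that you additionally verify the strict-positivity hypothesis of Lemma~\ref{ab}, which the paper leaves implicit; that is a harmless (indeed welcome) extra check.
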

\begin{proof}
We prove the result by backward recurrence. For $n=N$, the statement is true by the definitions of $\lambda$-CPS, $\rho_{N+1}^{+}$ and $\rho_{N+1}^{-}$.

Now, we suppose that the result is true for some $n\in\{1,...,N\}$ and we prove it for $n-1$. We fix $y\in E_{n-1}$.
\begin{center}
\begin{tikzpicture}[grow=right]
\tikzstyle{level 0}=[rectangle,rounded corners, draw,level distance=20mm]
\tikzstyle{level 1}=[rectangle,rounded corners, draw,level distance=35mm, sibling distance=25mm]
\tikzstyle{level 2}=[rectangle,rounded corners, draw,level distance=45mm, sibling distance=15mm]
\node[level 0] {\small{$y$}}
child{node[level 1]{\small{$y\star d$}}
  child{node[level 2]{\small{$y\star d^2$}}}
  child{ node[level 2] {\small{$(y\star d)\star u$}}
         edge from parent node[fill=white] {$q_{n+1}(y\star d)$}
       }
}
child{node[level 1]{\small{$y\star u$}}
  child {node[level 2]{\small{$(y\star u)\star d$}}}
  child {node[level 2] {\small{$y\star u^2$}}
         edge from parent node[fill=white] {$q_{n+1}(y\star u)$}
        }
edge from parent node[fill=white] {$q_{n}(y)$}
};
\end{tikzpicture}

\end{center}
We know by the recurrence hypothesis that:
\begin{equation}\label{eqbak1}
\frac{\tilde{S}_{n}(y\star u)}{S_{n}(y\star u)}\in\left[(1-\lambda)\rho_{n+1}^-(\Qs,y\star u),\rho_{n+1}^+(\Qs,y\star u)\right].
\end{equation}
and
\begin{equation}\label{eqbak2}
\frac{\tilde{S}_{n}(y\star d)}{S_{n}(y\star d)}\in\left[(1-\lambda)\rho_{n+1}^-(\Qs,y\star d),\rho_{n+1}^+(\Qs,y\star d)\right].
\end{equation}
The result follows immediately by applying Lemma~\ref{ab}.
\end{proof}

\subsection{\texorpdfstring{Some properties of the functions $\rho_n^+$ and $\rho_n^-$}{}}
As we have seen in Proposition \ref{pcn} the functions $\rho^+$ and $\rho^-$ encode the effect of the martingale property in the
dynamic of the bid-ask spread intervals. Therefore, it seems important to understand their nature. To this end we present in this paragraph
some useful properties of these functions.

\begin{lemma}\label{la}Let $\Qs$ be a probability measure equivalent to $P$. For each $n\in \{1,...,N\}$ and $x\in E_{n-1}$.
\begin{enumerate}
\item If $\rho_n^+(\Qs,x)=1$, then $\alpha_n(x)>1$.
\item If $\rho_n^-(\Qs,x)=1$, then $\beta_n(x)<1$.
\item If $\rho_n^+(\Qs,x)=\rho_n^-(\Qs,x)=1$, then for $y\in\{x\star u,x \star d\}$:
$$\rho_{n+1}^+(\Qs,y)=\rho_{n+1}^-(\Qs,y)=1\quad\textrm{and}\quad q_n(x)=\frac{1-\beta_n(x)}{\alpha_n(x)-\beta_n(x)}.$$
\end{enumerate}
\end{lemma}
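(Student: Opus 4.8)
The plan is to exploit two structural facts that follow at once from the recursive definition of $\rho_n^{\pm}$ together with Assumption~\ref{ass}. First, because of the $1\wedge[\,\cdot\,]$ and $1\vee[\,\cdot\,]$ in the recursion, one has $\rho_m^+(\Qs,\cdot)\leq 1\leq\rho_m^-(\Qs,\cdot)$ for every $m\in\{1,\dots,N+1\}$; in particular $\rho_{n+1}^+\leq 1\leq\rho_{n+1}^-$ at both successors $x\star u$ and $x\star d$. Second, since $\Qs\sim P$, Remark~\ref{eqprob} forces $q_n(x)\in(0,1)$ strictly, while by hypothesis $0<\beta_n(x)<\alpha_n(x)$. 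Consequently the weighted average $q_n(x)\,\alpha_n(x)+(1-q_n(x))\,\beta_n(x)$ lies \emph{strictly} between $\beta_n(x)$ and $\alpha_n(x)$. These two observations drive all three parts.

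For (1), I would read $\rho_n^+(\Qs,x)=1$ as the assertion that the bracketed term in its recursion is $\geq 1$; bounding $\rho_{n+1}^+\leq 1$ from above then yields $q_n(x)\,\alpha_n(x)+(1-q_n(x))\,\beta_n(x)\geq 1$. Since this average is strictly smaller than $\alpha_n(x)$ (because $q_n(x)<1$ and $\beta_n(x)<\alpha_n(x)$), I conclude $\alpha_n(x)>1$. Part (2) is the mirror image: $\rho_n^-(\Qs,x)=1$ forces the bracket to be $\leq 1$, and bounding $\rho_{n+1}^-\geq 1$ from below gives $q_n(x)\,\alpha_n(x)+(1-q_n(x))\,\beta_n(x)\leq 1$; as this average is strictly larger than $\beta_n(x)$, I obtain $\beta_n(x)<1$.

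For (3) the idea is a squeezing argument. Write, for brevity,
$$A^{\pm}:=q_n(x)\,\alpha_n(x)\,\rho_{n+1}^{\pm}(\Qs,x\star u)+(1-q_n(x))\,\beta_n(x)\,\rho_{n+1}^{\pm}(\Qs,x\star d).$$
Assuming $\rho_n^+(\Qs,x)=\rho_n^-(\Qs,x)=1$, the analyses of (1) and (2) give $A^+\geq 1$ and $A^-\leq 1$; and since $\rho_{n+1}^+\leq\rho_{n+1}^-$ at both successors with strictly positive weights $q_n(x)\,\alpha_n(x)$ and $(1-q_n(x))\,\beta_n(x)$, also $A^+\leq A^-$. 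Hence $1\leq A^+\leq A^-\leq 1$ and every inequality is an equality. From $A^+=A^-$ I obtain a sum of two non-negative terms equal to zero, so each vanishes; strict positivity of the weights then forces $\rho_{n+1}^-=\rho_{n+1}^+$ at $x\star u$ and at $x\star d$, and combined with $\rho_{n+1}^+\leq 1\leq\rho_{n+1}^-$ this pins both values to $1$. Substituting these back gives $q_n(x)\,\alpha_n(x)+(1-q_n(x))\,\beta_n(x)=1$, which I solve to get $q_n(x)=\frac{1-\beta_n(x)}{\alpha_n(x)-\beta_n(x)}$.

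I expect no serious obstacle; the only point requiring genuine care is to invoke the \emph{strictness} $q_n(x)\in(0,1)$ coming from $\Qs\sim P$ rather than the weaker $q_n(x)\in[0,1]$, since it is precisely this strictness that delivers the strict inequalities $\alpha_n(x)>1$ and $\beta_n(x)<1$ in (1)--(2) and that licenses cancelling the positive weights in (3). A minor bookkeeping remark is the boundary case $n=N$, where the successors lie in $E_N$ and the argument $\Qs$ is dropped from $\rho_{N+1}^{\pm}$; since $\rho_{N+1}^{\pm}\equiv 1$, the same computation applies verbatim.
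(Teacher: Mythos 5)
Your proof is correct and follows essentially the same route as the paper: read $\rho_n^+=1$ (resp.\ $\rho_n^-=1$) as the bracket being $\geq 1$ (resp.\ $\leq 1$), bound $\rho_{n+1}^+\leq 1\leq\rho_{n+1}^-$, and for part (3) squeeze $1\leq A^+\leq A^-\leq 1$ to force $\rho_{n+1}^{\pm}\equiv 1$ at both successors and hence $q_n(x)\,\alpha_n(x)+(1-q_n(x))\,\beta_n(x)=1$. The only (welcome) difference is presentational: you argue (1) and (2) directly rather than by contradiction, and you make explicit the role of the strictness $q_n(x)\in(0,1)$ coming from $\Qs\sim P$, which the paper uses only implicitly.
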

\begin{proof}
For $n=N$, the statements follow immediately as $\alpha_N(x)-\beta_N(x)>0$ and $\rho_{N+1}^+(\Qs,x)=\rho_{N+1}^-(\Qs,x)=1$. Now let $n\in \{1,...,N-1\}$ and $x\in E_{n-1}$.\\
(1) As $\rho_n^+(\Qs,x)=1$, we have that
\begin{equation}\label{rho+}
q_{n}(x)\,\alpha_{n}(x)\,\rho_{n+1}^+(\Qs,x\star u)+(1-q_{n}(x))\,\beta_{n}(x)\,\rho_{n+1}^+(\Qs,x\star d)\geq1.
\end{equation}
From the above inequality we immediately deduce that $\alpha_n(x)>1$. Indeed, if we assume that $\alpha_n(x)\leq 1$ and since $\rho_{n+1}^+(\Qs,y)\leq1$, with $y\in\{x\star u,x\star d\}$,
and $\beta_n(x)<\alpha_n(x)$, then (\ref{rho+}) implies that:
$$1\leq q_{n}(x)\,\alpha_{n}(x)\,\rho_{n+1}^+(\Qs,x\star u)+(1-q_{n}(x))\,\beta_{n}(x)\,\rho_{n+1}^+(\Qs,x\star d)<1.$$
Hence, we obtained a contradiction.\\
(2) When $\rho_n^-(\Qs,x)=1$, it follows that:
\begin{equation}\label{rho-}
q_{n}(x)\,\alpha_{n}(x)\,\rho_{n+1}^-(\Qs,x\star u)+(1-q_{n}(x))\,\beta_{n}(x)\,\rho_{n+1}^-(\Qs,x\star d)\leq1.
\end{equation}
Like previously, we can directly deduce, using the fact that $\rho_{n+1}^-(\Qs,y)\geq1$ with $y\in\{x\star u,x\star d\}$, that $\beta_n(x)<1$.\\
(3) If $\rho_n^+(\Qs,x)=\rho_n^-(\Qs,x)=1$, the inequalities (\ref{rho+}) and (\ref{rho-}) hold simultaneously. Using that
$\rho_{n+1}^+(\Qs,y)\leq1$ and $\rho_{n+1}^-(\Qs,y)\geq1$ for $y\in\{x\star u,x\star d\}$, we can see that this is only possible if
$\rho_{n+1}^+(\Qs,y)=\rho_{n+1}^-(\Qs,y)=1$ for $y\in\{x\star u,x \star d\}$.
We only have to plug this in (\ref{rho+}) and (\ref{rho-}) to obtain that:
$$q_n(x)\alpha_n(x)+(1-q_n(x))\beta_n(x)=1$$
which clearly implies that $q_n(x)=\frac{1-\beta_n(x)}{\alpha_n(x)-\beta_n(x)}$.
\end{proof}

In order to lighten some of the proofs, we introduce some extra notations. For each $n\in\{1,...,N\}$ and $x\in E_{n-1}$, we set:
$$r_n^+(\Qs,x)=q_{n} (x)\alpha_{n} (x)\rho_{n+1}^+(\Qs,x\star u)+ (1-q_{n} (x))\beta_{n}(x)\rho_{n+1}^+(\Qs,x\star d)$$
and
$$r_n^-(\Qs,x)=q_{n} (x)\alpha_{n} (x)\rho_{n+1}^-(\Qs,x\star u)+ (1-q_{n} (x))\beta_{n}(x)\rho_{n+1}^-(\Qs,x\star d).$$
Using these notations, we remark that:
\begin{equation}\label{rvsrho}
\rho_n^+(\Qs,x)=1\wedge r_n^+(\Qs,x)\textrm{ and }\rho_n^-(\Qs,x)=1\vee r_n^-(\Qs,x).
\end{equation}
Note that, from these identities and the definitions, we can deduce the following chain of inequalities:
\begin{equation}\label{irhpmr}
\rho_n^+(\Qs,x)\leq r_n^+(\Qs,x)\leq r_n^-(\Qs,x)\leq \rho_n^-(\Qs,x).
\end{equation}
\begin{remark}[Continuity]
For each \mbox{$n\in\{1,...,N+1\}$} and $x\in E_{n-1}$, the functions $\rho_n^+(\cdot,x)$ and $\rho_n^-(\cdot,x)$ are continuous. The proof follows easily by backward recurrence. Indeed,
for $n=N+1$, we have that $\rho_{N+1}^+=\rho_{N+1}^-\equiv 1$ and hence they are continuous. For the induction step, by assuming that $\rho_{n+1}^+$
and $\rho_{n+1}^-$ are continuous, it follows directly from the definition that $\rho_n^+$ and $\rho_n^-$ are as well continuous. As a consequence the functions $r_n^+(\cdot,x)$ and $r_n^-(\cdot,x)$ for each
$n\in\{1,...,N\}$ and $x\in E_{n-1}$ are also continuous.
\end{remark}
\begin{remark}\label{r1}
Note that for fixed $n\in\{1,...,N\}$ and $x\in E_{n-1}$, the quantities $\rho_{n}^+(\Qs,x)$ and $\rho_{n}^-(\Qs,x)$ depend only on the coordinates of $\Qs$ associated to the nodes of the sub-tree generated by $x$ and not on the whole probability $\Qs$.
\end{remark}

\subsection{Necessary and sufficient condition}
In this paragraph we establish necessary and  sufficient conditions for a measure to induce a $\lambda$-CPS.

In order to provide a necessary condition, we define for $n\in\{1,...,N\}$ and $x\in E_{n-1}$:
$$\Delta_n^\lambda(\Qs,x)\equiv \rho_n^+(\Qs,x)-(1-\lambda)\rho_n^-(\Qs,x).$$
Note that $\Delta_n^\lambda(\Qs,x)S_{n-1}(x)$ is the length of the effective bid-ask spread interval at the time $n-1$ at the position $x$.
Thus, the following necessary condition appears in a natural way.
\begin{corollary}[Necessary condition]\label{ccn}
If $\lambda\in[0,1]$ and $\Qs\in\Ms(\lambda)$, then for all $n\in\{1,...,N\}$ and $x\in E_{n-1}$:
$$\Delta_n^\lambda(\Qs,x) \geq 0.$$
\end{corollary}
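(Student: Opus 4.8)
The plan is to read this off directly from Proposition~\ref{pcn}, since the inequality $\Delta_n^\lambda(\Qs,x) \geq 0$ is nothing but the assertion that a certain interval is nonempty. First I would unwind the hypothesis $\Qs\in\Ms(\lambda)$: by the very definition of $\Ms(\lambda)$ there exists a process $\tilde{S}$ with $(\Qs,\tilde{S})\in\Ss^\lambda$, so I have an honest $\lambda$-CPS at my disposal to which the earlier results apply.

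Next, I would apply Proposition~\ref{pcn} with the time index $n-1$ in place of $n$ and with $y=x\in E_{n-1}$. Here one must mind the index shift: the corollary speaks of $\rho_n^{\pm}(\Qs,x)$ with $x\in E_{n-1}$, whereas the proposition, evaluated at a position $y\in E_m$, produces $\rho_{m+1}^{\pm}(\Qs,y)$; choosing $m=n-1$ matches the two. This yields
$$\frac{\tilde{S}_{n-1}(x)}{S_{n-1}(x)} \in \left[(1-\lambda)\rho_n^-(\Qs,x), \rho_n^+(\Qs,x)\right].$$
I would also remark that the proposition is stated for $m\in\{0,\dots,N\}$, so the boundary case $n=1$, where $m=0$ and $x=x_0\in E_0$, is indeed covered.

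The conclusion is then immediate: the left-hand side above is a well-defined real number lying in the displayed interval, so that interval cannot be empty, which forces its left endpoint to be at most its right endpoint, i.e. $(1-\lambda)\rho_n^-(\Qs,x)\leq\rho_n^+(\Qs,x)$. Rearranging gives precisely $\Delta_n^\lambda(\Qs,x)=\rho_n^+(\Qs,x)-(1-\lambda)\rho_n^-(\Qs,x)\geq 0$. I do not expect any genuine obstacle here: the corollary is essentially a reformulation of the nonemptiness of the effective bid-ask spread interval already secured in Proposition~\ref{pcn}. The only points requiring care are the index bookkeeping just described and the explicit verification that the case $n=1$ falls within the range of the proposition.
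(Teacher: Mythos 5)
Your proof is correct and is exactly the argument the paper intends by its one-line ``Direct from Proposition~\ref{pcn}'': you unwind $\Qs\in\Ms(\lambda)$ to a $\lambda$-CPS, apply the proposition at time $n-1$ to get $\tilde{S}_{n-1}(x)/S_{n-1}(x)\in[(1-\lambda)\rho_n^-(\Qs,x),\rho_n^+(\Qs,x)]$, and read off nonemptiness of the interval. The index shift and the boundary case $n=1$ are handled correctly, so nothing further is needed.
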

\begin{proof}
Direct from Proposition \ref{pcn}
\end{proof}
Now, we establish a sufficient condition, which is in fact the converse of Corollary \ref{ccn}.
\begin{proposition}[Sufficient condition]\label{cpcn}
If for $\lambda>0$ there exists $\Qs\sim P$ such that for all $n\in\{1,...,N\}$ and $x\in E_{n-1}$:
$$\Delta_n^\lambda(\Qs,x) \geq 0,$$
then $\Qs\in\Ms(\lambda)$.
\end{proposition}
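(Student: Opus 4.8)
The plan is to reverse-engineer Proposition~\ref{pcn}: instead of reading off constraints from a given $\lambda$-CPS, I will build one by hand. Writing $\tilde{S}_n(y)=\gamma_n(y)\,S_n(y)$, the pair $(\Qs,\tilde{S})$ is a $\lambda$-CPS precisely when (i) each ratio satisfies the effective bid-ask constraint $\gamma_n(y)\in\left[(1-\lambda)\rho_{n+1}^-(\Qs,y),\rho_{n+1}^+(\Qs,y)\right]$, and (ii) $\tilde{S}$ is a $\Qs$-martingale, which, using $S_n(y\star u)=\alpha_n(y)S_{n-1}(y)$ and $S_n(y\star d)=\beta_n(y)S_{n-1}(y)$, reads
$$\gamma_{n-1}(y)=q_n(y)\,\alpha_n(y)\,\gamma_n(y\star u)+(1-q_n(y))\,\beta_n(y)\,\gamma_n(y\star d).$$
Observe that (i) automatically forces $\gamma_n(y)\in[1-\lambda,1]$, since $\rho^+\le 1$ and $\rho^-\ge 1$, so condition (i) already delivers \eqref{CPSd}; moreover the hypothesis $\Delta_n^\lambda(\Qs,x)\ge 0$ is exactly the statement that every such target interval is nonempty.

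First I would set up a forward induction on $n$. At the root, pick any $\gamma_0(x_0)$ in the (nonempty) interval $\left[(1-\lambda)\rho_1^-(\Qs,x_0),\rho_1^+(\Qs,x_0)\right]$. For the inductive step, assume $\gamma_{n-1}(y)$ has been chosen inside its target interval; I must produce child values $\gamma_n(y\star u)$ and $\gamma_n(y\star d)$ lying in their respective target intervals and realizing the martingale average (ii). Since $q_n(y)\alpha_n(y)$ and $(1-q_n(y))\beta_n(y)$ are strictly positive (here $q_n(y)\in(0,1)$ because $\Qs\sim P$, and $\alpha_n,\beta_n>0$), the affine averaging map is increasing in each argument, so the set of attainable averages as the children range over their intervals is exactly the interval whose endpoints are obtained by substituting the endpoints of each child interval. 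A short computation identifies these endpoints as $(1-\lambda)r_n^-(\Qs,y)$ and $r_n^+(\Qs,y)$.

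The crux is then the containment
$$\left[(1-\lambda)\rho_n^-(\Qs,y),\rho_n^+(\Qs,y)\right]\subseteq\left[(1-\lambda)r_n^-(\Qs,y),r_n^+(\Qs,y)\right],$$
which follows at once from the chain \eqref{irhpmr}, namely $\rho_n^+\le r_n^+$ and $r_n^-\le\rho_n^-$. This guarantees that the value $\gamma_{n-1}(y)$, already lying in the left-hand interval, is attainable as a martingale average; hence admissible children exist by the intermediate value property of the affine map over the connected rectangle of child values. Iterating down to $n=N$ determines all the ratios $\gamma_n(y)$, and thus the process $\tilde{S}$.

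Finally I would check that $(\Qs,\tilde{S})$ is a genuine $\lambda$-CPS: the martingale property holds by construction (ii), and \eqref{CPSd} holds because each $\gamma_n(y)\in[1-\lambda,1]$. Therefore $\Qs\in\Ms(\lambda)$. I expect the only real obstacle to be the bookkeeping in the inductive step, namely correctly identifying the attainable-average interval and matching its endpoints with $(1-\lambda)r_n^-(\Qs,y)$ and $r_n^+(\Qs,y)$; once this is done, the containment supplied by \eqref{irhpmr} does all the work, and the remaining verifications are routine.
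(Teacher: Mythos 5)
Your proof is correct and follows essentially the same route as the paper's: a forward induction over the tree in which, at each node, admissible child values in the effective bid--ask intervals realizing the martingale average are produced using the chain of inequalities \eqref{irhpmr}. The only difference is presentational: the paper parametrizes the choice by the single quantity $d_{n+1}(y\star u)$ and verifies two interval compatibilities by explicit algebra, whereas you identify the set of attainable martingale averages directly as $\left[(1-\lambda)r_n^-(\Qs,y),\,r_n^+(\Qs,y)\right]$ and conclude by the containment $\rho_n^+\leq r_n^+$, $r_n^-\leq\rho_n^-$ --- a cleaner packaging of the same step.
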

\begin{proof}
We fix $\lambda>0$ and $\Qs\sim P$ such that for all $n\in\{1,...,N\}$ and $x\in E_{n-1}$:
$$\Delta_n^\lambda(\Qs,x) \geq 0,$$
and we will construct inductively a process $\tilde{S}={(\tilde{S}_n)}_{n=0}^N$ such that $(\Qs,\tilde{S})$ is a $\lambda$-CPS.

We start by taking:
\begin{equation}\label{ecpnc1}
\tilde{S}_0(x_0)=\tilde{s}_0\in[(1-\lambda)\rho_1^-(\Qs,x_0)s_0,\rho_1^+(\Qs,x_0)s_0].
\end{equation}
We set:
$$d_1(x_0)=\rho_1^+(\Qs,x_0)-\frac{\tilde{s}_0}{s_0},$$
and we note that $0\leq d_1(x_0)\leq \Delta_1^\lambda(\Qs,x_0)$.

Now, for $n\in \{1,...,N\}$, we suppose that we have constructed a $(n-1)$-step martingale $\tilde{S}={(\tilde{S}_k)}_{k=0}^{n-1}$ verifying:
\begin{equation}\label{basc}
 \frac{\tilde{S}_{k}(z)}{S_{k}(z)}\in[(1-\lambda)\rho_{k+1}^-(\Qs,z),\rho_{k+1}^+(\Qs,z)],
\end{equation}
for all $k\in\{0,...,n-1\}$ and $z\in E_{k}$. We note that, by defining:
\begin{equation}\label{eddk}
 d_{k+1}(z)=\rho_{k+1}^+(\Qs,z)-\frac{\tilde{S}_{k}(z)}{S_{k}(z)},\qquad k\in\{0,...,n-1\},\,z\in E_{k},
\end{equation}
condition \eqref{basc} is equivalent to:
\begin{equation}\label{eqdr}
0\leq d_{k+1}(z)\leq \Delta_{k+1}^\lambda(\Qs,z).
\end{equation}

The goal is to extend $\tilde{S}$ to a $n$-step martingale satisfying \eqref{basc} for $k=n$.
With this purpose in mind, we fix $y\in E_{n-1}$ and we aim to construct $\tilde{S}_{n}(y\star u)$ and $\tilde{S}_{n}(y\star d)$. Since the extension
of $\tilde{S}$ must verify the $\Qs$-martingale property, we need only to choose in a proper way $\tilde{S}_{n}(y\star u)$ and then to put:
\begin{equation}\label{emart}
 \tilde{S}_n(y\star d)=\frac{\tilde{S}_{n-1}(y)-q_n(y)\tilde{S}_n(y\star u)}{1-q_n(y)}.
\end{equation}
So, we need to prove that we can choose $\tilde{S}_{n}(y\star u)$ in the associated effective bid-ask spread interval, in such a way that $\tilde{S}_{n}(y\star d)$
defined by means of \eqref{emart} is also in the corresponding effective bid-ask spread interval. Equivalently, we need to show that we can choose $d_{n+1}(y\star u)$ such that:
\begin{equation}\label{ed1}
0\leq d_{n+1}(y\star u)\leq \Delta_{n+1}^\lambda(\Qs,y\star u),
\end{equation}
and, by setting:
\begin{equation}\label{esnyu}
\tilde{S}_n(y\star u)=\left(\rho_{n+1}^+(\Qs,y\star u)-d_{n+1}(y\star u)\right)S_n(y\star u),
\end{equation}
we have that $\tilde{S}_{n}(y\star d)$ defined by \eqref{emart} verifies:
\begin{equation}\label{esnyd}
\frac{\tilde{S}_{n}(y\star d)}{S_{n}(y\star d)}\in[(1-\lambda)\rho_{n+1}^-(\Qs,y\star d),\rho_{n+1}^+(\Qs,y\star d)].
\end{equation}
To this end, we will express condition \eqref{esnyd} in terms of $d_{n+1}(y\star u)$ and then prove that this condition is compatible with \eqref{ed1}.

Plugging \eqref{esnyu} in \eqref{emart} and using \eqref{eddk} for $k=n-1$, we obtain that:
\begin{equation*}
 \frac{\tilde{S}_{n}(y\star d)}{S_{n}(y\star d)}=\frac{q_n(y)\alpha_n(y)}{(1-q_n(y))\beta_n(y)}\left[\frac{\rho_{n}^+(\Qs,y)-d_{n}(y)}{q_n(y)\alpha_n(y)}-\rho_{n+1}^+(\Qs,y\star u)+d_{n+1}(y\star u)\right],
\end{equation*}
and then, condition \eqref{esnyd} becomes:
\begin{equation}\label{cdn12}
r_n(\Qs,y)- \frac{(1-q_n(y))\beta_n(y) \Delta_{n+1}^\lambda(\Qs,y\star d)}{q_n(y)\alpha_n(y)}\leq d_{n+1}(y\star u)\leq r_n(\Qs,y),
\end{equation}
where
$$r_n(\Qs,y)=\frac{r_n^+(\Qs,y)-\rho_n^+(\Qs,y)+d_n(y)}{q_n(y)\alpha_n(y)}.$$
Note that condition \eqref{cdn12} makes sense, because $\Delta_{n+1}^\lambda(\Qs,y\star d)\geq 0$ by hypothesis. Similarly, condition \eqref{ed1} make sense since $\Delta_{n+1}^\lambda(\Qs,y\star u)\geq 0$.
Moreover, as $r_n^+(\Qs,y)\geq\rho_n^+(\Qs,y)$ and $d_n(y)\geq 0$, we have that $r_n(\Qs,y)\geq0$. It follows that the right hand side of the inequality \eqref{cdn12}
is compatible with the left hand side of inequality \eqref{ed1}. It remains to prove that the right hand side of inequality \eqref{ed1} is compatible with the left hand side of \eqref{cdn12}, that means:
$$r_n(\Qs,y)- \frac{(1-q_n(y))\beta_n(y) \Delta_{n+1}^\lambda(\Qs,y\star d)}{q_n(y)\alpha_n(y)}\leq\Delta_{n+1}^\lambda(\Qs,y\star u),$$
which is equivalent to:
$$q_n(y)\alpha_n(y)r_n(\Qs,y)\leq r_n^+(\Qs,y)-(1-\lambda)r_n^-(\Qs,y),$$
which is also equivalent to:
$$d_n(y)\leq\Delta_{n}^\lambda(\Qs,y)+(1-\lambda)\left(\rho_n^-(\Qs,y)-r_n^-(\Qs,y) \right),$$
which is true by \eqref{eqdr} and the fact that $\rho_n^-(\Qs,y)\geq r_n^-(\Qs,y)$. We conclude the existence of $d_{n+1}(y\star u)$ verifying \eqref{ed1} and
\eqref{cdn12} and then, by means of \eqref{esnyu} and \eqref{emart}, the existence of $\tilde{S}_{n}(y\star u)$ and $\tilde{S}_{n}(y\star d)$ verifying the desired properties.
Repeating the procedure for each $y\in E_{n-1}$, we succeed to extend $\tilde{S}$ to a $n$-step $\lambda$-CPS.\\
Thus, thanks to a forward recurrence, we can construct $\tilde{S}$ such that $(\Qs,(\tilde{S}_n)_{0\leq n\leq N})$ is a $\lambda$-CPS. The result was proved.
\end{proof}

\section{Characterizations}\label{s3}
In the previous section, we have found a necessary and sufficient condition for a measure $\Qs$ to induce a $\lambda$-CPS. Based on
this condition, in this section, we obtain a characterization for the smallest transaction cost $\lambda_c$ necessary to remove arbitrage opportunities.
Similarly, we obtain a characterization of the set $\Ms(\lambda)$ as the preimage of an interval of a continuous function on the space of probability
measures equivalent to $P$. We end this section studying in depth the set $\Ms(\lambda_c)$.

Before to start with the mentioned characterizations, we define the function $\rho:\Ps_1(\Omega)\rightarrow (0,1]$ by putting:
$$\rho(\Qs)=\min\limits_{n\in\{1,...,N\}}\left[\min\limits_{x\in E_{n-1}}\frac{\rho_n^+(\Qs,x)}{\rho_n^-(\Qs,x)}\right],\quad \Qs\in\Ps_1(\Omega),$$
which will play a crucial role in what follows. Note that this function is continuous, because it is the minimum of a finite number of continuous functions.

\subsection{\texorpdfstring{Characterization of $\lambda_c$}{}}
\begin{theorem}\label{thmc}
We have that:
$$\lambda_c=1-\sup\limits_{Q\sim P}\rho(Q).$$
\end{theorem}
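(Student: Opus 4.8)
The plan is to reduce the statement entirely to the necessary and sufficient conditions of Corollary~\ref{ccn} and Proposition~\ref{cpcn}, and then to perform a careful analysis of the supremum on the right-hand side. First I would record the reformulation $\lambda_c=\inf\{\lambda\in[0,1]:\Ms(\lambda)\neq\emptyset\}$, which follows from Theorem~\ref{TFAP} together with the definition of $\Ms(\lambda)$: the existence of a $\lambda$-CPS is exactly $\cS^\lambda\neq\emptyset$, i.e.\ $\Ms(\lambda)\neq\emptyset$.

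The central step is the equivalence, valid for $\lambda>0$ and $\Qs\sim P$,
$$\Qs\in\Ms(\lambda)\iff \Delta_n^\lambda(\Qs,x)\geq0 \text{ for all }n,x \iff \rho(\Qs)\geq 1-\lambda.$$
The first equivalence is precisely the combination of Corollary~\ref{ccn} and Proposition~\ref{cpcn}. The second follows by dividing the inequality $\Delta_n^\lambda(\Qs,x)=\rho_n^+(\Qs,x)-(1-\lambda)\rho_n^-(\Qs,x)\geq0$ by $\rho_n^-(\Qs,x)>0$ and then taking the minimum over all $n\in\{1,\dots,N\}$ and $x\in E_{n-1}$, which by definition of $\rho$ turns the universal condition into $\rho(\Qs)\geq1-\lambda$. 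Writing $s:=\sup_{\Qs\sim P}\rho(\Qs)$, I conclude that for every $\lambda>0$ one has $\Ms(\lambda)\neq\emptyset$ if and only if there exists $\Qs\sim P$ with $\rho(\Qs)\geq1-\lambda$.

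Next I would identify the infimum by a sandwich argument on the set $L:=\{\lambda\in[0,1]:\Ms(\lambda)\neq\emptyset\}$. If $\lambda>1-s$, then $1-\lambda<s$, so by definition of the supremum there is $\Qs\sim P$ with $\rho(\Qs)>1-\lambda$, hence $\lambda\in L$; this gives $(1-s,1]\subseteq L$. Conversely, if $0<\lambda<1-s$, then $1-\lambda>s\geq\rho(\Qs)$ for every $\Qs\sim P$, so $\Ms(\lambda)=\emptyset$. These two inclusions already pin the infimum to $1-s$, and I emphasize that they do \emph{not} require $s$ to be attained over the (non-compact) set of measures equivalent to $P$, which is exactly what makes the non-attainment harmless. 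It remains only to dispose of the boundary point $\lambda=0$, the single place where Proposition~\ref{cpcn} (stated for $\lambda>0$) does not apply directly. If $s=1$ then $1-s=0$ and the inclusion $(0,1]\subseteq L$ already forces $\lambda_c=\inf L=0=1-s$. If $s<1$, I claim $\Ms(0)=\emptyset$: otherwise some $\Qs\sim P$ would satisfy, by Corollary~\ref{ccn}, $\Delta_n^0(\Qs,x)=\rho_n^+(\Qs,x)-\rho_n^-(\Qs,x)\geq0$ for all $n,x$; combined with $\rho_n^+\leq\rho_n^-$ from \eqref{irhpmr} and $\rho_n^+\leq1\leq\rho_n^-$, this yields $\rho_n^+(\Qs,x)=\rho_n^-(\Qs,x)=1$ throughout, hence $\rho(\Qs)=1$ and $s=1$, a contradiction. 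Thus $L$ contains no point $\leq1-s$ and $\lambda_c=\inf L=1-s$.

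The main obstacle is not any deep computation, since all the substantive analytic work was already carried out in Section~4; it is rather the bookkeeping at the boundary. One must argue that the supremum being taken over the open set of equivalent measures (and hence possibly not attained) does not affect the value of the infimum, and one must handle $\lambda=0$ separately because the sufficiency direction is unavailable there. The sandwich inclusions $(1-s,1]\subseteq L$ and $(0,1-s)\cap L=\emptyset$ are precisely what neutralize the first issue, while the degeneracy forced by $\Delta_n^0\geq0$ together with \eqref{irhpmr} neutralizes the second.
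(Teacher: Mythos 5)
Your proof is correct and follows essentially the same route as the paper: both directions rest on the necessary condition (Proposition~\ref{pcn}/Corollary~\ref{ccn}) and the sufficient condition (Proposition~\ref{cpcn}), translated into the single inequality $\rho(\Qs)\geq 1-\lambda$ and then into a statement about the infimum. Your extra bookkeeping at $\lambda=0$ and the remark that non-attainment of the supremum is harmless are points the paper handles implicitly by working only with strict inequalities $\lambda>\lambda_c$ and $\lambda<\lambda_c$, but they do not change the substance of the argument.
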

\begin{proof}
We start proving that $\lambda_c\geq 1-\sup\limits_{Q\sim P}\rho(Q)$. By definition of $\lambda_c$, for each $\lambda>\lambda_c$ there exists a $\lambda$-CPS: $(\Qs,\tilde{S})$.
By using Proposition \ref{pcn}, we deduce that for all $n\in\{1,...,N\}$ and $x\in E_{n-1}$:
$$(1-\lambda)\rho_n^-(\Qs,x)\leq \rho_n^+(\Qs,x).$$
We divide by $\rho_n^-(\Qs,x)$ both sides of this inequality and we take the minimum on all $n\in\{1,...,N\}$ and $x\in E_{n-1}$ to obtain:
$$1-\lambda\leq \rho(\Qs)\leq \sup\limits_{Q\sim P}\rho(Q),$$
and then $\lambda\geq 1- \sup\limits_{Q\sim P}\rho(Q)$. The statement follows because the last inequality is true for all $\lambda>\lambda_c$.

Now, we prove that $\lambda_c\leq 1-\sup\limits_{Q\sim P}\rho(Q)$. For this, we take $\lambda<\lambda_c$. By Proposition \ref{cpcn}, for each probability
$\Qs\sim P$, there exists $n\in\{1,...,N\}$ and $x\in E_{n-1}$ such that:
$$1-\lambda>\frac{\rho_n^+(\Qs,x)}{\rho_n^-(\Qs,x)}\geq \rho(\Qs),$$
and then $1-\lambda>\rho(\Qs)$. Since this inequality is true for all $\Qs\sim P$, we deduce that:
$$\lambda<1-\sup\limits_{Q\sim P}\rho(Q).$$
The last inequality being true for all $\lambda<\lambda_c$, the result follows.
\end{proof}
We end this paragraph with the following representation of $\lambda_c$, which is slightly different to that obtained in Theorem \ref{thmc}.
\begin{corollary}\label{lac}
 We have that:
$$\lambda_c=1-\sup\limits_{Q\in\Ps_1(\Omega)}\rho(Q).$$
\end{corollary}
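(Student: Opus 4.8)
The plan is to deduce the corollary from Theorem~\ref{thmc} by showing that replacing the supremum over $\{Q\sim P\}$ by the supremum over all of $\Ps_1(\Omega)$ leaves the value unchanged. One inequality is free: since every measure equivalent to $P$ belongs to $\Ps_1(\Omega)$, we have
$$\sup_{Q\sim P}\rho(Q)\leq\sup_{Q\in\Ps_1(\Omega)}\rho(Q).$$
Hence, in view of the formula $\lambda_c=1-\sup_{Q\sim P}\rho(Q)$ from Theorem~\ref{thmc}, it only remains to establish the reverse inequality, i.e.\ that enlarging the domain does not increase the supremum of $\rho$.

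The key observation is that, under the identification of $\Ps_1(\Omega)$ with the cube ${[0,1]}_*^{2^N-1}$, the equivalent measures fill out exactly the open cube. Indeed, by Remark~\ref{eqprob} one has $Q\sim P$ if and only if $Q(\{\omega\})>0$ for every $\omega$; since each $Q(\{\omega\})$ is the product of the conditional probabilities $q_n(x)$ and $1-q_n(x)$ taken along the path $\omega$, this amounts to requiring $q_n(x)\in(0,1)$ at every node. Thus $\{Q\sim P\}$ is precisely the interior ${(0,1)}_*^{2^N-1}$, which is dense in the compact set ${[0,1]}_*^{2^N-1}=\Ps_1(\Omega)$ for the metric $d_\infty$.

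To finish I would invoke the continuity of $\rho$ on $\Ps_1(\Omega)$, already recorded above (it is the minimum of the finitely many ratios $\rho_n^+/\rho_n^-$, which are continuous since each $\rho_n^-\geq 1$). Given an arbitrary $Q^*\in\Ps_1(\Omega)$, I would pick a sequence $Q_k\sim P$ with $d_\infty(Q_k,Q^*)\to 0$, which exists by the density just noted; continuity then gives $\rho(Q^*)=\lim_k\rho(Q_k)\leq\sup_{Q\sim P}\rho(Q)$. Taking the supremum over $Q^*\in\Ps_1(\Omega)$ yields the reverse inequality, and combining it with the trivial one shows that the two suprema coincide; the claimed expression for $\lambda_c$ then follows from Theorem~\ref{thmc}. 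There is no genuine obstacle here: the only point requiring care is the identification of the equivalent measures with the open cube, so that density applies, and this is exactly the content of Remark~\ref{eqprob}.
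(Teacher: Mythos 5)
Your proof is correct and follows essentially the same route as the paper: both reduce the statement to Theorem~\ref{thmc} via the trivial inclusion $\{Q:Q\sim P\}\subseteq\Ps_1(\Omega)$ and then close the gap by combining the continuity of $\rho$ with the density of the equivalent measures (the open cube) in $\Ps_1(\Omega)\cong{[0,1]}_*^{2^N-1}$. The only cosmetic difference is that the paper first takes the maximizer $\Qs^*$ of $\rho$ on the compact cube and perturbs its degenerate coordinates $q_n^*(y)\in\{0,1\}$ to $\varepsilon$ or $1-\varepsilon$, whereas you approximate an arbitrary $Q^*\in\Ps_1(\Omega)$ and then pass to the supremum; both arguments are sound.
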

\begin{proof}
By Theorem \ref{thmc}, we know that $\lambda_c=1-\sup\limits_{Q\sim P}\rho(Q)$. Since:
$$\{Q: Q\sim P\}\subseteq\Ps_1(\Omega),$$
we have that
$$\sup\limits_{Q\sim P}\rho(Q)\leq \sup\limits_{Q\in\Ps_1(\Omega)}\rho(Q)=\rho(\Qs^*).$$
If $\Qs^*\sim P$, the result follows. If this is not the case, we define, for each $0<\varepsilon<1$, a probability measure $\Qs^\varepsilon\sim P$,
by setting for all $n\in \{1,...,N\}$ and $y\in E_{n-1}$:
\begin{equation*}
 q_n^{\varepsilon}(y)=\begin{cases}
q_n^*(y) & \text{if } q_n^*(y)\in(0,1),\\
\varepsilon & \text{if } q_n^*(y) = 0,\\
1-\varepsilon & \text{if } q_n^*(y) =1,
\end{cases}
\end{equation*}
where $q_n^*(y):=q_n(\Qs^*,y)$ and $q_n^{\varepsilon}(y):=q_n(\Qs^{\varepsilon},y)$ with $q_n(\cdot,y)$ given by \eqref{qn}.
As $\rho_n^+$ and $\rho_n^-$ are continuous functions for each $n\in\{1,\ldots,N+1\}$, it follows that $\rho$ is as well continuous, and
therefore
$$\rho(\Qs^{\varepsilon})\xrightarrow[\varepsilon\to0]{} \rho(\Qs^*).$$
Now, since $\Qs^{\varepsilon}\sim P$,
$$\rho(\Qs^{\varepsilon})\leq \sup\limits_{Q\sim P}\rho(Q)\leq \rho(\Qs^*),$$
and we obtain that indeed $\sup\limits_{Q\sim P}\rho(Q)=\rho(\Qs^*)$.
\end{proof}
\begin{remark}
The advantage of Corollary \ref{lac} with respect to Theorem \ref{thmc} lies in the fact that the supremum of $\rho$ in the set $\{\Qs: \Qs\sim P\}$ is
not necessarily reached while the supremum of $\rho$ taken in $\Ps_1(\Omega)$ it is. This will be particularly useful in order to obtain good upper bounds
for $\lambda_c$.
\end{remark}

\begin{remark}\label{r6}
 Note that, if $\al_n(x)\geq1\geq\beta_n(x)$ for all $n\in\{1,...,N\}$ and $x\in E_{n-1}$, then $\la_c=0$. Indeed, if we define the probability $\Qs^*$ by:
 $$q_n(\Qs^*,x)=\frac{1-\beta_n(x)}{\alpha_n(x)-\beta_n(x)},\qquad n\in\{1,...,N\},\, x\in E_{n-1},$$
 we can prove easily that:
 $$\rho(\Qs^*)=1.$$
Using Corollary \ref{lac} we deduce that $\lambda_c=0$.
 \end{remark}
\subsection{\texorpdfstring{Characterization of $\mathcal{M}(\lambda)$}{}}
\begin{theorem}\label{thmmlc}
 For each $\lambda\in[0,1]$, we have that:
 $$\mathcal{M}(\lambda)=\left\{ \Qs\sim P:\,\rho(\Qs)\geq 1-\lambda\right\}=\rho_*^{-1}\left([1-\lambda,1]\right),$$
 where $\rho_*$ denotes the restriction of $\rho$ to the set of all probability measures $\Qs\sim P$.
\end{theorem}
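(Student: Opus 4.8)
The plan is to prove the first displayed equality by a double inclusion, the second equality being immediate since $\rho$ takes values in $(0,1]$ (so $\rho(\Qs)\geq 1-\lambda$ is the same as $\rho(\Qs)\in[1-\lambda,1]$, i.e.\ $\Qs\in\rho_*^{-1}([1-\lambda,1])$). The whole argument rests on translating the two results of Section~\ref{s2}---the necessary condition (Corollary~\ref{ccn}) and the sufficient condition (Proposition~\ref{cpcn}), both phrased through $\Delta_n^\lambda(\Qs,x)=\rho_n^+(\Qs,x)-(1-\lambda)\rho_n^-(\Qs,x)$---into a single statement about $\rho(\Qs)$.

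The key elementary observation is the following equivalence, valid for any fixed $\Qs\sim P$. Since $\rho_n^-(\Qs,x)=1\vee r_n^-(\Qs,x)\geq 1>0$, dividing by $\rho_n^-(\Qs,x)$ shows that
$$\Delta_n^\lambda(\Qs,x)\geq 0 \iff \frac{\rho_n^+(\Qs,x)}{\rho_n^-(\Qs,x)}\geq 1-\lambda.$$
Requiring this for every $n\in\{1,\dots,N\}$ and every $x\in E_{n-1}$ is, by the very definition of $\rho$, the same as requiring $\rho(\Qs)=\min_{n,x}\rho_n^+(\Qs,x)/\rho_n^-(\Qs,x)\geq 1-\lambda$. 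Thus the whole family of inequalities $\{\Delta_n^\lambda(\Qs,x)\geq 0\}_{n,x}$ collapses to the single inequality $\rho(\Qs)\geq 1-\lambda$.

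With this in hand, I would argue both inclusions. For $\Ms(\lambda)\subseteq\{\Qs\sim P:\rho(\Qs)\geq 1-\lambda\}$: if $\Qs\in\Ms(\lambda)$, Corollary~\ref{ccn} gives $\Delta_n^\lambda(\Qs,x)\geq 0$ for all $n,x$, whence $\rho(\Qs)\geq 1-\lambda$ by the equivalence above; this direction is valid for all $\lambda\in[0,1]$. For the reverse inclusion when $\lambda>0$: if $\rho(\Qs)\geq 1-\lambda$ then $\Delta_n^\lambda(\Qs,x)\geq 0$ for all $n,x$, and Proposition~\ref{cpcn} yields $\Qs\in\Ms(\lambda)$.

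The one point requiring separate care---and the main obstacle---is the boundary value $\lambda=0$, where Proposition~\ref{cpcn} (stated for $\lambda>0$) does not apply, so I would argue directly. By \eqref{irhpmr} one always has $\rho_n^+(\Qs,x)\leq\rho_n^-(\Qs,x)$, so $\rho(\Qs)\leq 1$; hence $\rho(\Qs)\geq 1$ forces $\rho_n^+(\Qs,x)=\rho_n^-(\Qs,x)$ for all $n,x$, and since $\rho_n^+(\Qs,x)\leq 1\leq\rho_n^-(\Qs,x)$ this common value must equal $1$. Lemma~\ref{la}(3) then gives $q_n(x)=(1-\beta_n(x))/(\alpha_n(x)-\beta_n(x))$ for all $n,x$, which is precisely the martingale relation \eqref{eqme}; consequently $S$ itself is a $\Qs$-martingale, so the pair $(\Qs,S)$ satisfies \eqref{CPSd} with $\lambda=0$ and is a $0$-CPS. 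Therefore $\Qs\in\Ms(0)$, completing the reverse inclusion and the proof.
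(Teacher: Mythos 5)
Your proof is correct and follows essentially the same route as the paper's: both directions reduce to the observation that, since $\rho_n^-(\Qs,x)\geq 1>0$, the family of conditions $\Delta_n^\lambda(\Qs,x)\geq 0$ over all $n,x$ is equivalent to the single inequality $\rho(\Qs)\geq 1-\lambda$, after which Corollary~\ref{ccn} and Proposition~\ref{cpcn} give the two inclusions. Your separate treatment of $\lambda=0$ (forcing $\rho_n^+=\rho_n^-=1$ everywhere, then invoking Lemma~\ref{la}(3) to recover the martingale relation \eqref{eqme} so that $(\Qs,S)$ is itself a $0$-CPS) is a sensible extra precaution that the paper omits---it applies Proposition~\ref{cpcn}, which is stated only for $\lambda>0$, without comment---so if anything your version is slightly more careful at that boundary case.
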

\begin{proof}
We fix $\lambda\in[0,1]$. We prove first that:
 $$\mathcal{M}(\lambda)\subseteq\left\{ \Qs\sim P:\,\rho(\Qs)\geq 1-\lambda\right\}$$
Indeed, if $\Qs\in\mathcal{M}(\lambda)$, then by Corollary \ref{ccn}, we conclude that for all $n\in\{1,...,N\}$ and
$x\in E_{n-1}$: $\Delta_n^\lambda(\Qs,x)\geq 0$, hence by definition:
$$1-\lambda\leq\frac{\rho_n^+(\Qs,x)}{\rho_n^-(\Qs,y)}\leq 1.$$
It follows that $\rho(\Qs)\geq 1-\lambda$, and this proved the first inclusion.

It remains to prove that:
 $$\mathcal{M}(\lambda)\supseteq\left\{ \Qs\sim P:\,\rho(\Qs)\geq 1-\lambda\right\}.$$
In order to do this, we take $\Qs\sim P$ such that $\rho(\Qs)\geq 1-\lambda$. This implies that for all $n\in\{1,...,N\}$ and
$x\in E_{n-1}$:
$$\frac{\rho_n^+(\Qs,x)}{\rho_n^-(\Qs,y)}\geq 1-\lambda,$$
and then $\Delta_n^\lambda(\Qs,x)\geq 0$. Using Proposition \ref{cpcn}
we conclude that $\Qs\in\mathcal{M}(\lambda)$. The proof is finished.
\end{proof}
\subsection{\texorpdfstring{Characterization of $\mathcal{M}(\lambda_c)$}{}}
The Theorem \ref{thmmlc} provides for each $\lambda\in[0,1]$ an expression for the set $\Ms(\lambda)$. Obviously, when $\lambda<\lambda_c$
we can be more precise and say that $\Ms(\lambda)=\emptyset$. When $\lambda>\lambda_c$, we can say that $\Ms(\lambda)\neq\emptyset$. But in the transition phase, i.e., when
$\lambda=\lambda_c$, we can not say a priori if $\Ms(\lambda)$ is empty or not. That is the goal of this paragraph.

We start with the next lemma which is a stronger version of the Theorem \ref{thmmlc} for the special case $\lambda=\lambda_c$.
\begin{lemma}\label{qlc}
$\Qs\in\Ms(\lambda_c)$ if and only if $\Qs\sim P$ and $\lambda_c=1-\rho(\Qs)$.
\end{lemma}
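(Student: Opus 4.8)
The plan is to derive the statement directly from the two characterizations already established, namely Theorem~\ref{thmmlc} and Theorem~\ref{thmc}; no new martingale construction is needed here.

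First I would specialize Theorem~\ref{thmmlc} to the value $\lambda=\lambda_c$, which gives
$$\Ms(\lambda_c)=\left\{\Qs\sim P:\ \rho(\Qs)\geq 1-\lambda_c\right\}.$$
Next I would invoke Theorem~\ref{thmc} to rewrite $1-\lambda_c=\sup\limits_{Q\sim P}\rho(Q)$, so that membership $\Qs\in\Ms(\lambda_c)$ becomes equivalent to the conjunction of $\Qs\sim P$ and $\rho(\Qs)\geq\sup\limits_{Q\sim P}\rho(Q)$.

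The key observation is then purely about suprema: for every $\Qs\sim P$ one trivially has $\rho(\Qs)\leq\sup\limits_{Q\sim P}\rho(Q)$, since $\Qs$ itself is one of the competitors in the supremum. Therefore the inequality $\rho(\Qs)\geq\sup\limits_{Q\sim P}\rho(Q)$ can only hold with equality. Hence $\Qs\in\Ms(\lambda_c)$ if and only if $\Qs\sim P$ and $\rho(\Qs)=\sup\limits_{Q\sim P}\rho(Q)=1-\lambda_c$, which is exactly the condition $\lambda_c=1-\rho(\Qs)$. Reading this chain of equivalences forward gives the ``only if'' direction, and backward gives the ``if'' direction, since $\lambda_c=1-\rho(\Qs)$ together with $\Qs\sim P$ immediately yields $\rho(\Qs)=1-\lambda_c\geq 1-\lambda_c$ and hence $\Qs\in\Ms(\lambda_c)$ by Theorem~\ref{thmmlc}.

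Since each step is an immediate consequence of a previously proved result, there is no genuine obstacle; the only point requiring (minor) care is the direction of the inequality in the supremum argument. One must notice that the lower bound $\rho(\Qs)\geq 1-\lambda_c$ supplied by Theorem~\ref{thmmlc} does not stand alone, but combines with the automatic upper bound $\rho(\Qs)\leq 1-\lambda_c$ — valid precisely because $\{Q:Q\sim P\}$ is the index set of the supremum defining $1-\lambda_c$ — to force an equality rather than leaving two independent constraints. This is what upgrades the generic description of Theorem~\ref{thmmlc} into the sharp equality characterizing the transition phase $\lambda=\lambda_c$.
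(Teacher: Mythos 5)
Your argument is correct and is essentially identical to the paper's proof: both directions are obtained by combining Theorem~\ref{thmmlc} at $\lambda=\lambda_c$ with the identity $1-\lambda_c=\sup_{Q\sim P}\rho(Q)$ from Theorem~\ref{thmc}, the forward direction using that $\rho(\Qs)$ cannot exceed the supremum over the class containing it. Nothing is missing.
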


\begin{proof}
If $\Qs\in\Ms(\lambda_c)$, then by using Theorems \ref{thmc} and \ref{thmmlc}, we obtain that:
$$\rho(\Qs)\geq 1-\lambda_c=\sup\limits_{Q\sim P}\rho(Q),$$
which implies that $\rho(\Qs)=1-\lambda_c$.\\
The other implication follows from Theorem \ref{thmmlc}.
\end{proof}

We know from this lemma, that if  $\Qs\in\Ms(\lambda_c)$, then there exists $n\in\{1,...,N\}$ and $x\in E_{n-1}$ such that:
\begin{equation}\label{minp}
 \rho_n^+(\Qs,x)=(1-\lambda_c)\rho_n^{-}(\Qs,x).
\end{equation}
The nodes verifying this identity will be particularly interesting in order to characterize $\Ms(\lambda_c)$. For this reason, we
define for each $\Qs\in\Ps(\Omega)$, the sets:
$$A_n(\Qs)=\left\{x\in E_{n-1}:\, \rho_n^+(\Qs,x)=(1-\lambda_c)\rho_n^{-}(\Qs,x)\right\},\qquad n\in\{1,...,N\},$$
and we put:
$$\nu(\Qs)=\sum\limits_{n=1}^N|A_n(\Qs)|.$$
By definition, $\nu(\Qs)$ is the number of points verifying \eqref{minp}. It follows from the previous lemma that if $\Qs\in\Ms(\lambda_c)$ then $\nu(\Qs)>0$. In that case, we can define:
$$k_\Qs=\max\{n\in\{1,...,N\}:\, A_n(\Qs)\neq\emptyset\}.$$
\begin{lemma}\label{laux}
 If $\Qs\in\Ms(\lambda_c)$, then for all $x\in A_{k_\Qs}(\Qs)$:
 $$r_{k_\Qs}^+(\Qs,x)>1 \,\textrm{  or  }\,r_{k_\Qs}^-(\Qs,x)<1.$$
\end{lemma}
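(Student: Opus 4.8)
The plan is to argue by contradiction, exploiting the maximality of $k_\Qs$. Write $k=k_\Qs$ and suppose that for some $x\in A_k(\Qs)$ the conclusion fails, i.e.\ that $r_k^+(\Qs,x)\le 1$ and $r_k^-(\Qs,x)\ge 1$ hold simultaneously. Then the identities \eqref{rvsrho} collapse to $\rho_k^+(\Qs,x)=1\wedge r_k^+(\Qs,x)=r_k^+(\Qs,x)$ and $\rho_k^-(\Qs,x)=1\vee r_k^-(\Qs,x)=r_k^-(\Qs,x)$. On the other hand, since $\Qs\in\Ms(\lambda_c)$, Lemma \ref{qlc} gives $\rho(\Qs)=1-\lambda_c$; hence every node-ratio $\rho_n^+(\Qs,\cdot)/\rho_n^-(\Qs,\cdot)$ is at least $1-\lambda_c$, and the membership $x\in A_k(\Qs)$ says precisely that $\rho_k^+(\Qs,x)/\rho_k^-(\Qs,x)=1-\lambda_c$.

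The core of the argument is to read this ratio as a convex combination of the two children-ratios. Set $t_u=\rho_{k+1}^+(\Qs,x\star u)/\rho_{k+1}^-(\Qs,x\star u)$ and $t_d=\rho_{k+1}^+(\Qs,x\star d)/\rho_{k+1}^-(\Qs,x\star d)$, which are well defined because $\rho^-\ge 1$. Writing $\rho_{k+1}^+=t\,\rho_{k+1}^-$ at each child in the definitions of $r_k^\pm$, I obtain
$$\frac{\rho_k^+(\Qs,x)}{\rho_k^-(\Qs,x)}=\frac{r_k^+(\Qs,x)}{r_k^-(\Qs,x)}=\frac{w_u\,t_u+w_d\,t_d}{w_u+w_d},$$
with weights $w_u=q_k(x)\alpha_k(x)\rho_{k+1}^-(\Qs,x\star u)$ and $w_d=(1-q_k(x))\beta_k(x)\rho_{k+1}^-(\Qs,x\star d)$. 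Both weights are strictly positive: $q_k(x)\in(0,1)$ because $\Qs\sim P$ (Remark \ref{eqprob}), the coefficients $\alpha_k(x),\beta_k(x)$ are strictly positive by Assumption \ref{ass}, and $\rho_{k+1}^-\ge 1$.

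The final step invokes maximality of $k=k_\Qs$. Since $A_{k+1}(\Qs)=\emptyset$, neither child $x\star u$ nor $x\star d$ belongs to $A_{k+1}(\Qs)$, so $t_u>1-\lambda_c$ and $t_d>1-\lambda_c$ strictly. A convex combination with positive weights of two numbers each exceeding $1-\lambda_c$ again exceeds $1-\lambda_c$, whence $\rho_k^+(\Qs,x)/\rho_k^-(\Qs,x)>1-\lambda_c$, contradicting $x\in A_k(\Qs)$. The level $k=N$ must be handled separately, since there the children are terminal nodes with $\rho_{N+1}^\pm\equiv 1$, forcing $t_u=t_d=1$ and a combination equal to $1$, which still exceeds $1-\lambda_c$ precisely when $\lambda_c>0$. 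I expect the main delicate points to be exactly these two: ensuring the convex combination is genuinely strict (positivity of the weights, via $\Qs\sim P$) and disposing of the terminal level, where strictness rests on $\lambda_c>0$.
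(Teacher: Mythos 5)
Your proof is correct and follows essentially the same route as the paper's: in both arguments the key step is that maximality of $k_\Qs$ forces the children's ratios $\rho_{k+1}^+/\rho_{k+1}^-$ strictly above $1-\lambda_c$, which (with the positive weights coming from $\Qs\sim P$) yields $r_k^+(\Qs,x)>(1-\lambda_c)\,r_k^-(\Qs,x)$ and rules out $r_k^+(\Qs,x)\le 1$ and $r_k^-(\Qs,x)\ge 1$ holding simultaneously at a node of $A_{k_\Qs}(\Qs)$. The caveat you flag at the terminal level $k=N$ (where strictness rests on $\lambda_c>0$) is equally present, though unstated, in the paper's proof, and is harmless because the lemma is only invoked after $\lambda_c>0$ has been established.
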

\begin{proof}
To simplify the notations, we set $k=k_\Qs$. Now, we fix $x\in A_k(\Qs)$. If $r_k^+(\Qs,x)\leq 1$, then, by maximality of $k$, we deduce that:
\begin{align*}
r_k^+(\Qs,x)&=q_{k} (x)\alpha_{k} (x)\rho_{k+1}^+(\Qs,x\star u)+ (1-q_{k} (x))\beta_{k}(x)\rho_{k+1}^+(\Qs,x\star d)\\
 &>(1-\lambda_c)\left(q_{k} (x)\alpha_{k} (x)\rho_{k+1}^-(\Qs,x\star u)+ (1-q_{k} (x))\beta_{k}(x)\rho_{k+1}^-(\Qs,x\star d)\right)\\
&=(1-\lambda_c) r_k^-(\Qs,x).
 \end{align*}
On the other hand, since $x\in A_k(\Qs)$ and $r_k^+(\Qs,x)\leq 1$:
$$r_k^+(\Qs,x)=\rho_k^+(\Qs,x)= (1-\lambda_c)[1\vee r_k^-(\Qs,x)].$$
Combining this identity with the previous inequality, we obtain the result.\\
\end{proof}
Note that if $\Qs\in\Ms(\lambda_c)$, from the lemma above, we can deduce that for each  $x\in A_{k_\Qs}(\Qs)$, we have either $\rho_{k_\Qs}^+(\Qs,x)=1$ or $\rho_{k_\Qs}^-(\Qs,x)=1$. However, the last assertion is not a priori stable
under small perturbations on $\Qs$ while the assertion in the lemma it is. More precisely, if we start with a point satisfying $r_k^+(\Qs,x)>1$ (respectively $r_k^{-}(\Qs,x)<1$), then by continuity, we can find $\varepsilon>0$ such that if
$d_\infty(\widehat{\Qs},\Qs)\leq \varepsilon$, then $r_k^+(\widehat{\Qs},x)>1$ (respectively $r_k^-(\widehat{\Qs},x)<1$).

Now, we fix $k=k_\Qs$ and $x\in A_{k}(\Qs)$. We will be interested on the behavior under small perturbations on $\Qs$ of the ratio:
\begin{equation}\label{ratio}
 \frac{\rho_{k}^+(\Qs,x)}{\rho_{k}^-(\Qs,x)}=1-\lambda_c.
\end{equation}
We know from the previous discussion that either the numerator or the denominator remains equal to one under small perturbations. If in addition
$\lambda_c>0$, we conclude that either $\rho_{k}^+(\Qs,x)<1$ or $\rho_{k}^-(\Qs,x)>1$. Using this, the next lemma proves in particular that
we can do small perturbations in such a way that the ratio in \eqref{ratio} increase.

\begin{lemma}\label{lmon}Let $\Qs\sim P$, $k\in\{1,...,N\}$ and $y\in E_{k-1}$. If $\rho_k^+(\Qs,y)<1$ (respectively $\rho_k^-(\Qs,y)>1$), then there exist $\ell\geq k$ and $(z,y)\in E_{\ell-1}$ such that for every $\varepsilon>0$ there exists $\Qs^\varepsilon\sim P$ verifying:
\begin{enumerate}
\item\label{lc1} $\lvert q_\ell^\varepsilon(z,y)-q_\ell(z,y)\rvert\leq \varepsilon.$
\item\label{lc2} $q_n^\varepsilon(x)=q_n(x)\textrm{ if and only if }  n\neq \ell \textrm{ or } x\neq(z,y).$
\item\label{lc3} $\rho_k^+(\Qs^\varepsilon,y)>\rho_k^+(\Qs,y)\,\, (\textrm{resp. } \rho_k^-(\Qs^\varepsilon,y)<\rho_k^-(\Qs,y)).$
\end{enumerate}
We used the notation $q_n^\varepsilon(\cdot):=q_n(\Qs^\varepsilon,\cdot)$, where $q_n(\Qs^\varepsilon,\cdot)$ is given in \eqref{qn}.
\end{lemma}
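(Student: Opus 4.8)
The plan is to prove the $\rho^+$--statement by induction on the height $N-k$ of the subtree rooted at $y$, the $\rho^-$--statement being entirely symmetric. First I would reduce the target to a statement about $r_k^+$: since $\rho_k^+=1\wedge r_k^+$ by \eqref{rvsrho}, the hypothesis $\rho_k^+(\Qs,y)<1$ is equivalent to $r_k^+(\Qs,y)<1$, and any perturbation that strictly increases $r_k^+(\cdot,y)$ automatically strictly increases $\rho_k^+(\cdot,y)$ (if the new $r_k^+$ stays below $1$ this is immediate, and if it crosses $1$ then $\rho_k^+$ jumps to $1$, still strictly above the old value $r_k^+(\Qs,y)<1$). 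Thus it suffices to produce a single-node perturbation that strictly increases $r_k^+(\cdot,y)$ while meeting conditions~(\ref{lc1}) and~(\ref{lc2}).

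For the base case $N-k=0$, and more generally whenever both children satisfy $\rho_{k+1}^+(\Qs,y\star u)=\rho_{k+1}^+(\Qs,y\star d)=1$ (which is automatic when $k=N$, since $\rho_{N+1}^+\equiv1$), I would take $\ell=k$, so the node is $y$ itself, and increase $q_k(y)$. Here $r_k^+(\Qs,y)=q_k(y)\alpha_k(y)+(1-q_k(y))\beta_k(y)$, whose derivative in $q_k(y)$ is $\alpha_k(y)-\beta_k(y)>0$; since $\Qs\sim P$ forces $q_k(y)\in(0,1)$, I can raise $q_k(y)$ by $\delta:=\min\{\varepsilon,(1-q_k(y))/2\}$, keeping $\Qs^\varepsilon\sim P$, changing only this one coordinate, and strictly increasing $r_k^+$.

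The recursive step treats the case where some child, say $y\star u$, has $\rho_{k+1}^+(\Qs,y\star u)<1$. I would apply the induction hypothesis at level $k+1$ to the node $y\star u$, obtaining $\ell\geq k+1\geq k$ and a single node inside the subtree of $y\star u$ — necessarily of the form $(z,y)$, since $y\star u$ extends $y$ — whose perturbation strictly increases $\rho_{k+1}^+(\cdot,y\star u)$. By the locality property (Remark~\ref{r1}) this perturbation leaves both $\rho_{k+1}^+(\cdot,y\star d)$ and $q_k(y)$ untouched, so in $r_k^+(\Qs,y)=q_k(y)\alpha_k(y)\rho_{k+1}^+(\Qs,y\star u)+(1-q_k(y))\beta_k(y)\rho_{k+1}^+(\Qs,y\star d)$ only the first summand changes, and as its coefficient $q_k(y)\alpha_k(y)>0$ the value $r_k^+(\cdot,y)$ strictly increases; conditions~(\ref{lc1}) and~(\ref{lc2}) are inherited verbatim.

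The $\rho^-$--statement follows by the mirror argument using $\rho_k^-=1\vee r_k^-$: one \emph{decreases} $q_k(y)$ in the analogue of the base case (again $\alpha_k(y)-\beta_k(y)>0$) and otherwise recurses into a child with $\rho_{k+1}^->1$, the positive coefficients now transmitting a decrease upward. The step I expect to require the most care is precisely the interplay between the pointwise caps and the localization: one must verify both that a strict increase of $r_k^+$ survives the operation $1\wedge(\cdot)$ (handled in the reduction above) and that a perturbation deep in one child's subtree is genuinely invisible to the sibling and to every conditional probability above it. This invisibility, guaranteed by Remark~\ref{r1}, is exactly what decouples the two subtrees and makes the clean single-branch induction possible.
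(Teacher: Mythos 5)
Your proof is correct and follows essentially the same route as the paper's: a backward induction starting at $k=N$, a perturbation of a single coordinate $q_\ell$ of size $\varepsilon\wedge\frac{1-q_\ell}{2}$ (or $\varepsilon\wedge\frac{q_\ell}{2}$), and the locality of Remark~\ref{r1} to propagate the strict increase of $r_k^+$ --- hence of $\rho_k^+=1\wedge r_k^+$ --- up to the node $y$. The only difference is the case split in the inductive step: the paper perturbs $q_k(y)$ itself whenever $\alpha_k(y)\rho_{k+1}^+(\Qs,y\star u)\neq\beta_k(y)\rho_{k+1}^+(\Qs,y\star d)$ and recurses into $y\star u$ only in the degenerate equality case, whereas you recurse whenever some child has $\rho_{k+1}^+<1$ and perturb $q_k(y)$ only when both children are saturated at $1$ (where the relevant derivative is $\alpha_k(y)-\beta_k(y)>0$); both case analyses are exhaustive and correct.
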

\begin{proof}
We give the proof for the assertion concerning the case $\rho_k^+(\Qs,y)<1$ (the proof for the case $\rho_k^-(\Qs,y)>1$ is analogous). We prove this by means of a backward induction on the level $k$.
So, we start with the proof for $k=N$.\\
If  $\rho_N^+(\Qs,y)<1$, then:
$$\rho_N^+(\Qs,y)=q_N(y)(\alpha_N(y)-\beta_N(y))+\beta_N(y)<1,$$
and then, it suffices to take $\ell=N$, $(z,y)=y$ and  for each $\varepsilon>0$, we choose:
$$q_N^\varepsilon(y)=q_N(y)+\delta(\varepsilon)\textrm{ with } \delta(\varepsilon)=\varepsilon\wedge\left(\frac{1-q_N(y)}{2}\right).$$
Since $q_N(y)\in(0,1)$, we have that $\delta(\varepsilon)>0$ and $q_N^\varepsilon(y)\in(0,1)$. These choices induce a new probability $\Qs^{\varepsilon}\sim P$
which verifies clearly conditions (1)-$N$,(2)-$N$ and (3)-$N$.\\
Now, we suppose that the assertion is true at the level $k+1$ and we prove that it is also true at the level $k$.\\
If  $\rho_k^+(\Qs,y)<1$, then:
\begin{equation}\label{rhol1}
\rho_k^+(\Qs,y)=q_k(y)\alpha_k(y)\rho_{k+1}^+(\Qs,y\star u)+(1-q_k(y))\beta_k(y)\rho_{k+1}^+(\Qs,y\star d)<1.
\end{equation}
At this point, there are three situations.

(i) If $\alpha_k(y)\rho_{k+1}^+(\Qs,y\star u)>\beta_k(y)\rho_{k+1}^+(\Qs,y\star d)$, we can take $\ell=k$, $(z,y)=y$ and  for each $\varepsilon>0$, we choose:
$$q_k^\varepsilon(y)=q_k(y)+\delta(\varepsilon)\textrm{ with } \delta(\varepsilon)=\varepsilon\wedge\left(\frac{1-q_k(y)}{2}\right),$$
and we can conclude as in the case $k=N$.

(ii) If $\alpha_k(y)\rho_{k+1}^+(\Qs,y\star u)<\beta_k(y)\rho_{k+1}^+(\Qs,y\star d)$, we can take $\ell=k$, $(z,y)=y$ and  for each $\varepsilon>0$, we choose:
$$q_k^\varepsilon(y)=q_k(y)-\delta(\varepsilon)\textrm{ with } \delta(\varepsilon)=\varepsilon\wedge\frac{q_k(y)}{2}.$$
Using similar arguments as before, we achieve the proof in this case.

(iii) If $\alpha_k(y)\rho_{k+1}^+(\Qs,y\star u)=\beta_k(y)\rho_{k+1}^+(\Qs,y\star d)$, then
$\rho_{k+1}^+(\Qs,y\star u)<1$ (because $\alpha_k(y)>\beta_k(y)$). Applying induction hypothesis at the level $k+1$ to $y\star u$, we obtain $\ell\geq k+1$, $(z,y\star u)\in E_{\ell-1}$
and for each $\varepsilon>0$ a probability measure $\Qs^\varepsilon\sim P$ verifying (1)-$(k+1)$, (2)-$(k+1)$ and (3)-$(k+1)$. (1)-$k$ and (2)-$k$ remain the same. Finally, condition (3)-$k$ follows by
plugging condition (3)-$(k+1)$ in \ref{rhol1}. The proof is finished.\\

\end{proof}
Before to establish the characterization theorem for $\Ms(\lambda_c)$, we fix some notations which will be useful in the proof. For each $\Qs\in \Ps_1(\Omega)$,
we put:
$$\As(\Qs)=\left\{(n,x): 1\leq n\leq N,\, x\in E_{n-1}\textrm{ s.t. }\frac{\rho_n^+(\Qs,x)}{\rho_n^-(\Qs,x)}=\rho(\Qs)\right\},$$
and we note that, if $\Qs\in\Ms(\lambda_c)$, then:
$$\As(\Qs)=\left\{(n,x): 1\leq n\leq N,\, x\in A_n(\Qs)\right\}.$$
Until now, we know how to perturb a measure $\Qs\in\Ms(\lambda_c)$ in order to increase the ratio \eqref{ratio} for a point $x\in A_{k_\Qs}(\Qs)$. If in addition
we want to perturb the measure in such a way that the sets $\As(\Qs)$ decrease (in the sense of the inclusion) we need to look to the quantity:

$$\tilde{\rho}(\Qs)=\min_{(n,x)\notin \As(\Qs)}\frac{\rho_n^+(\Qs,x)}{\rho_n^-(\Qs,x)},$$
using by convention $\tilde{\rho}(\Qs)=\rho(\Qs)$ if $\frac{\rho_n^+(\Qs,x)}{\rho_n^-(\Qs,x)}=\rho(\Qs)$ for all $n\in\{1,...,N\}$ and $x\in E_{n-1}$.
We define also:
$$\eta(\Qs)=\tilde{\rho}(\Qs)- \rho(\Qs)\geq 0.$$
\begin{theorem}
 We have that $\Ms(\lambda_c)\neq \emptyset$ if and only if for all $n\in\{1,...,N\}$ and  $x\in E_{n-1}$:
$$\beta_n(x)<1< \alpha_n(x),$$
 and in this case $\Ms(\lambda_c)=\{\Qs^0\}$, where $\Qs^0$ is the probability measure defined in the Paragraph \ref{ssnac}.
\end{theorem}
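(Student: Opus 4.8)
The plan is to reduce the whole statement to one elementary fact about the function $\rho$: for a measure $\Qs\sim P$ one has $\rho(\Qs)=1$ \emph{if and only if} $\beta_n(x)<1<\alpha_n(x)$ for all $n$ and $x\in E_{n-1}$, and in that case necessarily $\Qs=\Qs^0$. Indeed, by \eqref{irhpmr} we always have $\rho_n^+\le\rho_n^-$, and since $\rho_n^+\le 1\le\rho_n^-$ the equality $\rho(\Qs)=1$ forces $\rho_n^+(\Qs,x)=\rho_n^-(\Qs,x)=1$ at \emph{every} node. Lemma~\ref{la}(3) then yields $q_n(x)=\tfrac{1-\beta_n(x)}{\alpha_n(x)-\beta_n(x)}$ for all $n,x$, so $\Qs=\Qs^0$, while the equivalence $q_n(x)\in(0,1)$ is exactly the strict inequalities $\beta_n(x)<1<\alpha_n(x)$. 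Conversely, under these inequalities $\Qs^0$ is equivalent to $P$ and a direct computation (Remark~\ref{r6}) gives $\rho(\Qs^0)=1$.

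Granting this, the easy implication is immediate. If $\beta_n(x)<1<\alpha_n(x)$ everywhere, then $\rho(\Qs^0)=1$; since $\rho\le 1$ always, $\sup_{Q\sim P}\rho(Q)=1$, so $\lambda_c=0$ by Theorem~\ref{thmc}, and Lemma~\ref{qlc} gives $\Qs^0\in\Ms(\lambda_c)$. Any $\Qs\in\Ms(\lambda_c)=\Ms(0)$ satisfies $\rho(\Qs)=1-\lambda_c=1$ by Lemma~\ref{qlc}, hence $\Qs=\Qs^0$; thus $\Ms(\lambda_c)=\{\Qs^0\}$.

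For the converse I would show, by contradiction, that $\Ms(\lambda_c)\ne\emptyset$ forces $\lambda_c=0$; the strict inequalities then follow from the reduction above, since any $\Qs\in\Ms(\lambda_c)=\Ms(0)$ has $\rho(\Qs)=1$. So suppose $\Qs\in\Ms(\lambda_c)$ with $\lambda_c>0$; by Lemma~\ref{qlc}, $\rho(\Qs)=1-\lambda_c=\sup_{Q\sim P}\rho(Q)<1$, and the aim is to build $\Qs^\varepsilon\sim P$ with $\rho(\Qs^\varepsilon)>\rho(\Qs)$. I would run an induction that strictly decreases $\nu(\Qs)$ while staying in $\Ms(\lambda_c)$: take the top level $k=k_\Qs$ and a node $x\in A_k(\Qs)$. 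Since $\lambda_c>0$, Lemma~\ref{laux} together with $\rho_k^+(\Qs,x)=(1-\lambda_c)\rho_k^-(\Qs,x)$ puts us in one of two cases, namely $\rho_k^+(\Qs,x)=1,\ \rho_k^-(\Qs,x)>1$ (with $r_k^+>1$), or $\rho_k^+(\Qs,x)<1,\ \rho_k^-(\Qs,x)=1$ (with $r_k^-<1$). In either case Lemma~\ref{lmon} supplies a one-node perturbation $\Qs^\varepsilon$, supported in the subtree of $x$, strictly increasing the ratio $\rho_k^+(\cdot,x)/\rho_k^-(\cdot,x)$; choosing $\varepsilon<\eta(\Qs)$ keeps every non-minimal ratio strictly above $1-\lambda_c$.

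The delicate point — and the step I expect to be the real obstacle — is to verify that raising the ratio at $(k,x)$ does not push any \emph{other} minimal ratio below $1-\lambda_c$. Here the saturations $\wedge1$ and $\vee1$ in the definition of $\rho_n^\pm$ are decisive. In the case $\rho_k^-(\Qs,x)=1$ we have $r_k^-(\Qs,x)<1$, and for small $\varepsilon$ the inequality $r_k^-(\Qs^\varepsilon,x)<1$ persists, so $\rho_k^-(\Qs^\varepsilon,x)=1\vee r_k^-=1$ is \emph{unchanged}; because this capped value feeds the backward recursion of every ancestor $w$ of $x$, all quantities $\rho_m^-(\Qs^\varepsilon,w)$ with $m<k$ are frozen, while the $\rho_m^+(\Qs^\varepsilon,w)$ can only weakly increase (the recursion is monotone in the children's $\rho^+$). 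Hence every ancestor ratio weakly increases, the symmetric argument — with $\rho_k^+$ frozen at $1$ and $\rho^-$ weakly decreasing — handling the case $\rho_k^+(\Qs,x)=1$. Consequently no ratio drops below $1-\lambda_c$: the target ratio strictly increases, ancestor ratios do not decrease, the remaining same-level minima lie in disjoint subtrees and are untouched, and the perturbed descendants stay above $1-\lambda_c$ by the choice $\varepsilon<\eta(\Qs)$. Thus $\rho(\Qs^\varepsilon)=1-\lambda_c$ with $\nu(\Qs^\varepsilon)<\nu(\Qs)$, so $\Qs^\varepsilon\in\Ms(\lambda_c)$. Iterating, $\nu$ strictly decreases; once $\nu=1$ the single minimal ratio is raised and then \emph{all} ratios exceed $1-\lambda_c$, giving $\rho(\Qs^\varepsilon)>\sup_{Q\sim P}\rho(Q)$, the desired contradiction. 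Therefore $\lambda_c=0$, and the proof closes.
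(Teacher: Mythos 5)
Your proof is correct and follows essentially the same route as the paper: the forward implication reduces to $\rho(\Qs^0)=1$ (with uniqueness via Lemma~\ref{la}(3)), and the converse is the paper's contradiction argument, using Lemmas~\ref{laux} and~\ref{lmon} to perturb a measure in $\Ms(\lambda_c)$ so that the target ratio rises, ancestor ratios do not fall, non-minimal ratios are protected by $\eta$, and $\nu$ strictly decreases while $\rho$ stays at $1-\lambda_c$. The only loose end is the degenerate case $\eta(\Qs)=0$, where ``$\varepsilon<\eta(\Qs)$'' is vacuous; but then every node is minimal, forcing $k_\Qs=N$, so the perturbation touches a single terminal node and no $\eta$-control is needed --- exactly the paper's case (i.2).
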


\begin{proof}
($\Leftarrow$) If $\beta_n(x)<1< \alpha_n(x)$ for all $n\in\{1,...,N\}$ and  $x\in E_{n-1}$, then $\lambda_c=0$ (see Remark \ref{r6}) and the result is a consequence of the
no arbitrage condition in the frictionless case (see Paragraph \ref{ssnac}).\\
($\Rightarrow$) Assume that there exists $n_0\in\{1,...,N\}$ and  $x^*\in E_{n-1}$ such that $\beta_{n_0}(x^*)\geq 1$ or $\alpha_{n_0}(x^*)\leq 1$. We will
prove that $\Ms(\lambda_c)=\emptyset$. In order to do this, we proceed by contradiction, that means we suppose that there exists $\Qs\in\Ms(\lambda_c)$.
Since the no arbitrage condition for the frictionless case is not satisfied, we deduce that $\lambda_c>0$.\\
We set $k=k_\Qs$ and we fix $x\in A_k(\Qs)$. Thanks to Lemma \ref{laux}, we know that either $r_k^+(\Qs,x)>1$ or $r_k^-(\Qs,x)<1$.

(i) If $r_k^+(\Qs,x)>1$, by continuity, we can find $\delta_1>0$ such that:
$$d_\infty(\Qs,\widehat{\Qs})\leq\delta_1\Rightarrow r_k^+(\widehat{\Qs},x)>1.$$
(i.1) If $\eta=\eta(\Qs)>0$, again by continuity, we can find $\delta_2>0$ such that:
$$d_\infty(\Qs,\widehat{\Qs})\leq\delta_2\Rightarrow \max_{n\in\{1,...,N\}}\left\{\max_{y\in E_{n-1}}\left|\frac{\rho_n^+(\Qs,y)}{\rho_n^-(\Qs,y)}-\frac{\rho_n^+(\widehat{\Qs},y)}{\rho_n^-(\widehat{\Qs},y)}\right|\right\}\leq\frac{\eta}{2}.$$
Since $\rho_k^-(\Qs,x)>1$, by using Lemma \ref{lmon}, we can associate to $\varepsilon=\delta_1\wedge\delta_2>0$ a probability
measure $\Qs^\varepsilon$ verifying \eqref{lc1}, \eqref{lc2} and \eqref{lc3}. Conditions \eqref{lc1} and \eqref{lc2} and the fact that $\varepsilon\leq \delta_1$ implies that:
$$\rho_k^+(\Qs^\varepsilon,x)=\rho_k^+(\Qs,x)=1.$$
Using this and \eqref{lc3}, we obtain that:
$$\frac{\rho_k^+(\Qs^\varepsilon,x)}{\rho_k^-(\Qs^\varepsilon,x)}=\frac{1}{\rho_k^-(\Qs^\varepsilon,x)}>\frac{1}{\rho_k^-(\Qs,x)}=\frac{\rho_k^+(\Qs,x)}{\rho_k^-(\Qs,x)}=1-\lambda_c$$
and for each $m\leq k$ and $y\in E_{m-1}$:
\begin{equation}\label{eaux1}
\frac{\rho_m^+(\Qs^\varepsilon,y)}{\rho_m^-(\Qs^\varepsilon,y)}\geq \frac{\rho_m^+(\Qs,y)}{\rho_m^-(\Qs,y)}
\end{equation}
For the last assertion is crucial that, passing from $\Qs$ to $\Qs^\varepsilon$, the only change at the level $k$ is in the quantity $\rho_k^-(\Qs,x)$
which decreases.\\
Now, for each $n\in\{1,...,N\}$ and $y\notin A_n(\Qs)$, since $\varepsilon\leq\delta_2$, we have:
$$\frac{\rho_n^+(\Qs^\varepsilon,y)}{\rho_n^-(\Qs^\varepsilon,y)}\geq -\frac{\eta}{2}+\frac{\rho_n^+(\Qs,y)}{\rho_n^-(\Qs,y)}\geq -\frac{\eta}{2}+\tilde{\rho}(\Qs)=\frac{\eta}{2}+ 1-\lambda_c>1-\lambda_c.$$
From this and \eqref{eaux1},  we can deduce that $\rho(\Qs^\varepsilon)=1-\lambda_c$ and that:
$$A_n(\Qs^\varepsilon)\subseteq A_n(\Qs),$$
for each $n\in\{1,...,N\}$, the inclusion being strict for $n=k$.\\
(i.2) If $\eta(\Qs)=0$, we proceed in the same way, but taking $\varepsilon=\delta_1$. The arguments remain the same until \eqref{eaux1}
and from there, using that in this case $k=N$, we can obtain the same conclusion.

(ii) If $r_k^-(\Qs,x)<1$, by continuity, we can find $\delta_3>0$ such that:
$$d_\infty(\Qs,\widehat{\Qs})\leq\delta_3\Rightarrow r_k^-(\widehat{\Qs},x)<1$$
and the arguments are similar to those in (i), but taking now $\varepsilon=\delta_2\wedge\delta_3$ when $\eta(\Qs)>0$ and $\varepsilon=\delta_3$ in the other case. The only difference, is that now the probability measure
$\Qs^\varepsilon$ verifies:
$$\rho_k^-(\Qs^\varepsilon,x)=\rho_k^-(\Qs,x)=1\textrm{ and }\rho_k^+(\Qs^\varepsilon,x)\geq\rho_k^+(\Qs,x),$$
but the conclusions are the same.

Summarizing, starting from $\Qs\sim P$ satisfying $\rho(\Qs)=1-\lambda_c$, we construct a probability measure $\Qs^{(1)}=\Qs^\varepsilon\sim P$ such that $\rho(\Qs^{(1)})=1-\lambda_c$ and $\nu(\Qs^{(1)})<\nu(\Qs)$. We repeat the procedure
inductively, starting at each time with a probability measure $\Qs^{(i)}\sim P$ satisfying $\rho(\Qs^{(i)})=1-\lambda_c$ and constructing a new probability measure $\Qs^{(i+1)}\sim P$ such that $\rho(\Qs^{(i+1)})=1-\lambda_c$ and
$\nu(\Qs^{(i+1)})<\nu(\Qs^{(i)})$.  Necessarily, at some point we will arrive to a probability measure $\Qs^{(n_0)}\sim P$ verifying $\rho(\Qs^{(n_0)})=1-\lambda_c$ and $\nu(\Qs^{(n_0)})=0$, which is a contradiction.
\end{proof}
\section{Homogeneous and semi-homogeneous binary markets}
In this section, we are interested to deduce, from the characterization of $\lambda_c$ (Theorem \ref{thmc} or Corollary \ref{lac}), more explicit expressions
in some special cases of binary markets. More precisely, we focus in the two following cases:

\begin{itemize}
 \item \textit{Homogeneous case}: We refer to this case, when the parameters of the model are homogeneous in time and space. That means:
 $$0<\beta_n(x)=\beta<\alpha_n(x)=\alpha,$$
 for all $n\in\{1,...,N\}$ and $x\in E_{n-1}$.
 \item \textit{Semi-homogeneous case}: We refer to this case, when the parameters of the model are not necessarily homogeneous in time, but they are still homogeneous in space. That means:
 $$0<\beta_n(x)=\beta_n<\alpha_n(x)=\alpha_n,$$
for all $n\in\{1,...,N\}$ and $x\in E_{n-1}$.
 \end{itemize}

Henceforth, we assume that our binary markets are semi--homogeneous (the homogeneous case is covered). In this framework, we start by giving an upper bound for $\lambda_c$ and then we will prove that
this upper bound coincides with $\lambda_c$ for homogeneous binary markets and also for a large class of semi-homogeneous binary markets. In order to do this, based on the characterization of
$\lambda_c$ given by Corollary \ref{lac}, we construct a probability measure, by taking at each time the best  ``1-step'' choice, which gives us, by means of $\rho$, a natural upper bound and also a naive
candidate for the critical transaction cost $\lambda_c$.

Let $\Qs^*$ be the probability measure defined by:
$$q_n(\Qs^*,x)=:q_n^*=1_{\{\alpha_n\leq 1\}}+\frac{1-\beta_n}{\alpha_n-\beta_n}1_{\{\beta_n<1<\alpha_n\}},\quad n\in\{1,...,N\},\, x\in E_{n-1}.$$
and define the sequences of positive numbers ${\{\varrho_n^+\}}_{n=1}^{N+1}$, ${\{\varrho_n^-\}}_{n=1}^{N+1}$ and ${\{\gamma_n\}}_{n=1}^N$ by setting:
$$\varrho_{N+1}^+=\varrho_{N+1}^-=1$$
and for $n \in\{1,...,N\}$:
$$\gamma_n= \alpha_n 1_{\{\alpha_n\leq 1\}}+\beta_n 1_{\{\beta_n\geq 1\}}+1_{\{\beta_n<1<\alpha_n\}},$$
$$\varrho_n^+=1\wedge\left[\gamma_n\,\varrho_{n+1}^+\right]\quad\textrm{and}\quad\varrho_n^-=1\vee\left[\gamma_n\,\varrho_{n+1}^-\right].$$
The relation between these sequences of numbers and the functions $\rho^+$ and $\rho^-$ is given in the following lemma:
\begin{lemma}\label{lid}
For all $n\in\{1,...,N+1\}$ and $x\in E_{n-1}$:
$$\varrho_n^+= \rho_n^+(\Qs^*,x) \quad\textrm{and}\quad\varrho_n^-=\rho_n^-(\Qs^*,x)$$
\end{lemma}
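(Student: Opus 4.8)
The plan is to prove both identities simultaneously by backward induction on $n\in\{1,\dots,N+1\}$, exploiting the semi-homogeneity to show that $\rho_n^+(\Qs^*,\cdot)$ and $\rho_n^-(\Qs^*,\cdot)$ are in fact \emph{constant} on each level $E_{n-1}$, and that this common value satisfies exactly the recurrence defining $\varrho_n^+$ and $\varrho_n^-$. The base case $n=N+1$ is immediate, since by definition $\rho_{N+1}^+\equiv\rho_{N+1}^-\equiv 1$ and $\varrho_{N+1}^+=\varrho_{N+1}^-=1$.

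For the inductive step, suppose the claim holds at level $n+1$, so that $\rho_{n+1}^+(\Qs^*,y)=\varrho_{n+1}^+$ and $\rho_{n+1}^-(\Qs^*,y)=\varrho_{n+1}^-$ for every $y\in E_n$. Fix $x\in E_{n-1}$. Since the market is semi-homogeneous, $\alpha_n(x)=\alpha_n$ and $\beta_n(x)=\beta_n$, and applying the induction hypothesis at the two successor nodes $x\star u,x\star d\in E_n$ the defining recurrence for $\rho_n^+$ collapses to
$$\rho_n^+(\Qs^*,x)=1\wedge\bigl[\bigl(q_n^*\,\alpha_n+(1-q_n^*)\,\beta_n\bigr)\,\varrho_{n+1}^+\bigr],$$
and likewise $\rho_n^-(\Qs^*,x)=1\vee\bigl[\bigl(q_n^*\,\alpha_n+(1-q_n^*)\,\beta_n\bigr)\,\varrho_{n+1}^-\bigr]$. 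Comparing with the definitions of $\varrho_n^\pm$, it therefore suffices to establish the single algebraic identity $q_n^*\,\alpha_n+(1-q_n^*)\,\beta_n=\gamma_n$, since this identity forces the bracketed quantities to coincide with $\gamma_n\varrho_{n+1}^\pm$.

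The verification of this identity is the heart of the argument, and it is carried out by checking the three mutually exclusive and exhaustive cases that arise from $\beta_n<\alpha_n$. If $\alpha_n\leq 1$ then $q_n^*=1$ and the left-hand side equals $\alpha_n=\gamma_n$; if $\beta_n\geq 1$ then (since $\alpha_n>\beta_n\geq 1$) $q_n^*=0$ and the left-hand side equals $\beta_n=\gamma_n$; and if $\beta_n<1<\alpha_n$ then $q_n^*=\frac{1-\beta_n}{\alpha_n-\beta_n}$, a direct computation giving $q_n^*\,\alpha_n+(1-q_n^*)\,\beta_n=1=\gamma_n$. In each case the value of $q_n^*\,\alpha_n+(1-q_n^*)\,\beta_n$ is independent of $x$, which confirms that $\rho_n^\pm(\Qs^*,x)$ is constant in $x$ and equal to $\varrho_n^\pm$, closing the induction. (The slight notational change at $n=N$, where $x\star\cdot$ replaces $(\Qs,x\star\cdot)$, is harmless because $\rho_{N+1}^\pm$ is the constant $1$ there.)

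There is no serious obstacle beyond this case analysis; the only points needing care are to check that the three cases defining $\gamma_n$ are genuinely disjoint and exhaustive (which uses $\beta_n<\alpha_n$), and to observe that the weighted average $q_n^*\,\alpha_n+(1-q_n^*)\,\beta_n$ matches $\gamma_n$ precisely because $q_n^*$ is the frictionless risk-neutral probability when $\beta_n<1<\alpha_n$ and is pushed to the boundary $0$ or $1$ otherwise.
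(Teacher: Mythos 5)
Your proof is correct and follows essentially the same route as the paper: a backward induction in which the recurrence for $\rho_n^{\pm}(\Qs^*,\cdot)$ collapses to $1\wedge[\gamma_n\varrho_{n+1}^+]$ and $1\vee[\gamma_n\varrho_{n+1}^-]$ via the identity $q_n^*\alpha_n+(1-q_n^*)\beta_n=\gamma_n$. The only difference is that you spell out the three-case verification of that identity, which the paper leaves as "using the definitions of $q_n^*$ and $\gamma_n$."
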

\begin{proof}
We prove this by using a backward recurrence. By definition, the result is true for $n=N+1$. Now, we assume the result holds for $n+1$ and we prove it for $n$.
By definition of $\rho^+$ and the recurrence step, we obtain that:
$$\rho_{n}^+(\Qs^*,x)=1\wedge\left[\left(\,q_{n}^*\,\alpha_{n}+(1-q_{n}^*)\,\beta_{n}\right)\varrho_{n+1}^+\right],$$
and now, using the definitions of $q_n^*$ and $\gamma_n$:
$$\rho_{n}^+(\Qs^*,x)=1\wedge\left[\gamma_n\,\varrho_{n+1}^+\right].$$
The result follows from the definition of $\rho_n^+$. The proof for $\rho^-$ is analogous.
\end{proof}

In order to obtain an easy expression for $\varrho_n^+$ and $\varrho_n^-$, we introduce the sets:
$$\Lambda_n=\{1\}\cup\left\{\prod\limits_{\ell=0}^{k}\gamma_{n+\ell}:\, 0\leq k\leq N-n\right\}.$$

\begin{lemma}\label{fcr}
For all $n\in\{1,...,N\}$:
$$\varrho_n^+= \min \Lambda_n \quad\textrm{and}\quad\varrho_n^-= \max \Lambda_n.$$
\end{lemma}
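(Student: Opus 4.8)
The plan is to argue by backward induction on $n$, exactly in the spirit of the proof of Lemma~\ref{lid}, exploiting a self-similar structure of the sets $\Lambda_n$. The crucial observation is that, writing $\Lambda_{N+1}:=\{1\}$ by convention, the family $(\Lambda_n)$ obeys the recursion $\Lambda_n=\{1\}\cup\gamma_n\Lambda_{n+1}$, where $\gamma_n\Lambda_{n+1}:=\{\gamma_n t:\,t\in\Lambda_{n+1}\}$. Indeed, multiplying every element of $\Lambda_{n+1}=\{1,\gamma_{n+1},\gamma_{n+1}\gamma_{n+2},\dots,\gamma_{n+1}\cdots\gamma_N\}$ by $\gamma_n$ produces exactly the list of products $\{\gamma_n,\gamma_n\gamma_{n+1},\dots,\gamma_n\cdots\gamma_N\}$, which together with the adjoined $1$ is precisely $\Lambda_n$.

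First I would record that $\gamma_n>0$ for every $n$. This is immediate from the definition: since $0<\beta_n<\alpha_n$, exactly one of the three conditions $\{\alpha_n\le 1\}$, $\{\beta_n\ge 1\}$, $\{\beta_n<1<\alpha_n\}$ holds, and in each case $\gamma_n$ equals one of the strictly positive numbers $\alpha_n$, $\beta_n$, $1$. Positivity is what lets us commute $\gamma_n$ with $\min$ and $\max$: for any finite $\Lambda\subset\Rb_+$ one has $\gamma_n\min\Lambda=\min(\gamma_n\Lambda)$ and $\gamma_n\max\Lambda=\max(\gamma_n\Lambda)$.

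For the base case, $\varrho_{N+1}^+=\varrho_{N+1}^-=1=\min\Lambda_{N+1}=\max\Lambda_{N+1}$. For the inductive step, assume $\varrho_{n+1}^+=\min\Lambda_{n+1}$ and $\varrho_{n+1}^-=\max\Lambda_{n+1}$. Then, using the defining recursion for $\varrho_n^+$, positivity of $\gamma_n$, and the set recursion above,
$$\varrho_n^+=1\wedge\bigl[\gamma_n\,\varrho_{n+1}^+\bigr]=1\wedge\min(\gamma_n\Lambda_{n+1})=\min\bigl(\{1\}\cup\gamma_n\Lambda_{n+1}\bigr)=\min\Lambda_n,$$
and symmetrically
$$\varrho_n^-=1\vee\bigl[\gamma_n\,\varrho_{n+1}^-\bigr]=1\vee\max(\gamma_n\Lambda_{n+1})=\max\bigl(\{1\}\cup\gamma_n\Lambda_{n+1}\bigr)=\max\Lambda_n.$$
This would close the induction and prove the lemma.

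There is essentially no hard step here: the content is entirely in the elementary set identity $\Lambda_n=\{1\}\cup\gamma_n\Lambda_{n+1}$ and in the positivity of $\gamma_n$, which permits pulling the scalar through $\min$ and $\max$. The only point requiring a word of care is confirming that the three index sets in the definition of $\gamma_n$ partition the admissible range $0<\beta_n<\alpha_n$, so that $\gamma_n$ is well defined and strictly positive; I would dispatch this by the short case analysis indicated above.
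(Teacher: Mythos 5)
Your proof is correct and follows essentially the same route as the paper: a backward induction driven by the identity $\Lambda_n=\{1\}\cup\gamma_n\Lambda_{n+1}$, which is exactly the fact the paper's (terser) proof invokes. Your additional remarks on the positivity of $\gamma_n$ and the convention $\Lambda_{N+1}=\{1\}$ merely make explicit what the paper leaves implicit.
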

\begin{proof}
The result follows from a simple backward recurrence and the fact that:
$$\Lambda_n=\{1\}\cup \gamma_n\Lambda_{n+1}.$$
\end{proof}

\begin{remark}\label{rme}
If we define the sets:
$$\Lambda_n^*=\left\{\frac{x}{y}:\,x,y\in \Lambda_n\right\}\quad\textrm{and}\quad\Lambda_n^0=\left\{\prod\limits_{\ell=p}^k\gamma_{n+\ell}:\, 0\leq p\leq k\leq N-n\right\},$$
we can see that:
$$\frac{\min \Lambda_n}{\max \Lambda_n}=\min \Lambda_n^*=1\wedge\min\Lambda_n^0\wedge\frac{1}{\max\Lambda_n^0}.$$
\end{remark}

\begin{proposition}[Upper bound in the semi-homogeneous case]\label{ubsh} We have that:
$$\lambda_c\leq 1-1\,\wedge\,\min\Lambda_1^0\,\wedge\,\frac{1}{\max \Lambda_1^0}.$$
\end{proposition}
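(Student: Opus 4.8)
The plan is to exploit Corollary~\ref{lac}, which gives $\lambda_c=1-\sup_{Q\in\Ps_1(\Omega)}\rho(Q)$. To bound $\lambda_c$ from above it suffices to bound this supremum from below, and I would do so simply by evaluating $\rho$ at the explicit candidate $\Qs^*$. Since $\Qs^*\in\Ps_1(\Omega)$ we have $\sup_Q\rho(Q)\geq\rho(\Qs^*)$, hence $\lambda_c\leq 1-\rho(\Qs^*)$, and the whole proposition reduces to computing $\rho(\Qs^*)$ and identifying it with $1\wedge\min\Lambda_1^0\wedge\frac{1}{\max\Lambda_1^0}$.

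For that computation I would chain the three preceding results. First, Lemma~\ref{lid} says $\rho_n^+(\Qs^*,x)=\varrho_n^+$ and $\rho_n^-(\Qs^*,x)=\varrho_n^-$ for every $x\in E_{n-1}$, so the ratio $\rho_n^+(\Qs^*,x)/\rho_n^-(\Qs^*,x)$ is independent of $x$ and the inner minimum in the definition of $\rho$ disappears, leaving
$$\rho(\Qs^*)=\min_{n\in\{1,\dots,N\}}\frac{\varrho_n^+}{\varrho_n^-}.$$
Next, Lemma~\ref{fcr} gives $\varrho_n^+=\min\Lambda_n$ and $\varrho_n^-=\max\Lambda_n$, and Remark~\ref{rme} rewrites the quotient as
$$\frac{\varrho_n^+}{\varrho_n^-}=\frac{\min\Lambda_n}{\max\Lambda_n}=1\wedge\min\Lambda_n^0\wedge\frac{1}{\max\Lambda_n^0},$$
so that $\rho(\Qs^*)=\min_{n}\left(1\wedge\min\Lambda_n^0\wedge\frac{1}{\max\Lambda_n^0}\right)$.

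Finally, I would show that this minimum over $n$ is attained at $n=1$. Every element of $\Lambda_n^0$ is a product of consecutive factors $\gamma_j$ whose indices form a contiguous block contained in $\{n,\dots,N\}\subseteq\{1,\dots,N\}$, and therefore belongs to $\Lambda_1^0$; that is, $\Lambda_n^0\subseteq\Lambda_1^0$ for every $n$. Hence $\min\Lambda_1^0\leq\min\Lambda_n^0$ and $\max\Lambda_1^0\geq\max\Lambda_n^0$, so the $n=1$ term is the smallest of all the terms $1\wedge\min\Lambda_n^0\wedge\frac{1}{\max\Lambda_n^0}$. This yields $\rho(\Qs^*)=1\wedge\min\Lambda_1^0\wedge\frac{1}{\max\Lambda_1^0}$, and combining with $\lambda_c\leq 1-\rho(\Qs^*)$ gives exactly the claimed bound. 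I do not anticipate a genuine obstacle: the argument is a direct substitution of Lemmas~\ref{lid} and \ref{fcr} together with Remark~\ref{rme}, and the only step requiring a moment of care is the set-inclusion $\Lambda_n^0\subseteq\Lambda_1^0$, which is precisely what singles out the index $n=1$.
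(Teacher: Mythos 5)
Your proposal is correct and follows essentially the same route as the paper: both invoke Corollary~\ref{lac} with the explicit candidate $\Qs^*$, then chain Lemma~\ref{lid}, Lemma~\ref{fcr} and Remark~\ref{rme}, and conclude via the nesting $\Lambda_N^0\subseteq\cdots\subseteq\Lambda_1^0$. You merely spell out the steps (in particular why the minimum over $n$ is attained at $n=1$) that the paper leaves implicit.
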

\begin{proof}
From Corollary \ref{lac} and using Lemmas \ref{lid} and \ref{fcr}, we obtain that:
\begin{equation}
\lambda_c\leq 1- \min\limits_{n\in\{1,...,N\}}\left\{\frac{\min \Lambda_n}{\max \Lambda_n}\right\}.
\end{equation}
The statement follows from this inequality, Remark \ref{rme} and the fact that:
$$\Lambda_N^0\subseteq\Lambda_{N-1}^0\subseteq\cdots\subseteq\Lambda_1^0.$$
\end{proof}
The next Proposition covers the homogeneous case, as well as a large class of semi-homogeneous cases.
\begin{proposition}\label{pec}In the semi-homogeneous case:
\begin{enumerate}
\item If $\alpha_n\leq 1$, for all $n\in\{1,...,N\}$, then:
$$\lambda_c=1-\prod_{n=1}^N\alpha_n.$$
\item If $\beta_n\geq 1$, for all $n\in\{1,...,N\}$, then:
$$\lambda_c=1-\prod_{n=1}^N\frac1{\beta_n}.$$
\item If $\beta_n\leq 1\leq \alpha_n$, for all $n\in\{1,...,N\}$, then:
$$\lambda_c=0.$$
\end{enumerate}
\end{proposition}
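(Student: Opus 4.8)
The plan is to treat the three cases separately, exploiting that they are governed by the collapse of the definition of $\gamma_n$: case (3) is immediate, while cases (1) and (2) are mirror images of one another and require a lower bound sharper than the one furnished by Proposition~\ref{plblc}. For case (3), the hypothesis $\be_n\le 1\le\al_n$ for all $n$ is exactly the condition $\al_n(x)\ge 1\ge\be_n(x)$ of Remark~\ref{r6} in the semi-homogeneous setting, so $\la_c=0$ follows at once and nothing further is needed.

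For case (1), I would first secure the upper bound. Since $\be_n<\al_n\le 1$ forces $\be_n<1$, the definition of $\gamma_n$ collapses to $\gamma_n=\al_n$, so $\Lambda_1^0$ is the set of products $\prod_{\ell=p}^{k}\al_{1+\ell}$ of consecutive factors, each $\le 1$. Adjoining more factors $\le 1$ only decreases a product, whence $\min\Lambda_1^0=\prod_{n=1}^N\al_n$; and since every element is $\le 1$ we have $\max\Lambda_1^0\le 1$, so $1/\max\Lambda_1^0\ge 1$. Substituting into Proposition~\ref{ubsh} gives $1\wedge\min\Lambda_1^0\wedge(1/\max\Lambda_1^0)=\prod_{n=1}^N\al_n$ and hence $\la_c\le 1-\prod_{n=1}^N\al_n$.

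The crux is the matching lower bound, since Proposition~\ref{plblc} only yields the weaker $\la_c\ge 1-\min_n\al_n$. Instead I would argue directly from Theorem~\ref{thmc}, bounding $\rho(\Qs)$ from above \emph{uniformly} in $\Qs$. Two backward inductions on $n$ suffice: first, $\rho_n^-(\Qs,x)\ge 1$ holds for every $n$ by definition; second, I claim $\rho_n^+(\Qs,x)\le\prod_{j=n}^N\al_j$, with base case $n=N+1$ the empty product $1$. For the inductive step, using $\rho_{n+1}^+(\Qs,\cdot)\le\prod_{j=n+1}^N\al_j$, the inequality $1\wedge(\cdot)\le(\cdot)$, and $q_n\al_n+(1-q_n)\be_n\le\al_n$ (valid since $q_n\in[0,1]$ and $\be_n<\al_n$), the recursion defining $\rho_n^+$ gives
$$\rho_n^+(\Qs,x)\le\big(q_n\al_n+(1-q_n)\be_n\big)\prod_{j=n+1}^N\al_j\le\al_n\prod_{j=n+1}^N\al_j=\prod_{j=n}^N\al_j.$$
Evaluating at the root $(1,x_0)$ then yields $\rho(\Qs)\le\rho_1^+(\Qs,x_0)/\rho_1^-(\Qs,x_0)\le\prod_{n=1}^N\al_n$ for every $\Qs\sim P$, so $\sup_{\Qs\sim P}\rho(\Qs)\le\prod_{n=1}^N\al_n$ and therefore $\la_c\ge 1-\prod_{n=1}^N\al_n$ by Theorem~\ref{thmc}, matching the upper bound.

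Case (2) follows by the same scheme with $\al$ and $\be$ interchanged: here $\be_n\ge 1$ forces $\al_n>1$, so $\gamma_n=\be_n$, the bound of Proposition~\ref{ubsh} reduces to $1-1/\prod_n\be_n$, and the lower bound comes from the dual inductions $\rho_n^+(\Qs,x)\le 1$ and $\rho_n^-(\Qs,x)\ge\prod_{j=n}^N\be_j$ (using $q_n\al_n+(1-q_n)\be_n\ge\be_n$), giving $\rho(\Qs)\le 1/\prod_n\be_n$. I expect the main obstacle to be exactly this lower bound: one must recognize that the crude node-by-node estimate of Proposition~\ref{plblc} is too lossy and that the sharp inequality $\rho_n^+\le\prod_{j\ge n}\al_j$ must be propagated through the entire tree down to the root, which is what converts the correct product $\prod\al_n$ (rather than the single minimal factor) into a genuine bound on $\sup_{\Qs}\rho(\Qs)$.
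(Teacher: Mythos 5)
Your proof is correct and follows essentially the same route as the paper: the upper bound comes from Proposition~\ref{ubsh} after observing that $\gamma_n$ collapses to $\alpha_n$ (resp.\ $\beta_n$), and the lower bound comes from the same uniform backward induction $\rho_n^+(\Qs,x)\le\prod_{j=n}^N\alpha_j$ (resp.\ $\rho_n^-(\Qs,x)\ge\prod_{j=n}^N\beta_j$) combined with Theorem~\ref{thmc}, with case (3) delegated to Remark~\ref{r6} exactly as in the paper.
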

\begin{proof}
(1) We fix $\Qs\sim P$ and we will prove by a backward recurrence that for each $k\in\{1,...,N\}$:
$$\rho_k^+(\Qs,x)\leq \prod_{n=k}^N\alpha_n,$$
for all $x\in E_{k-1}$. In fact, for $k=N$, the statement is true by definition. Now, we suppose that:
$$\rho_{k+1}^+(\Qs,y)\leq\prod_{n=k+1}^N\alpha_n, $$
for all $y\in E_k$. By using this and the definition of $\rho_k^+$, we have that for each $x\in E_{k-1}$:
$$\rho_{k}^+(\Qs,x)\leq \alpha_k \prod_{n=k+1}^N\alpha_n.$$
This proves our statement. We can deduce that:
$$\rho(\Qs)\leq \prod_{n=1}^N\alpha_n.$$
As $\Qs$ is arbitrary, we obtain that: $\lambda_c\geq 1-\prod\limits_{n=1}^N\alpha_n$.

On the other hand, it follows from the definitions and Lemma \ref{fcr} that:
 $$\gamma_n=\alpha_n,\,\varrho_n^+=\prod_{k=n}^N\alpha_k\quad\textrm{and}\quad \varrho_n^-=1,$$
and the result is a consequence  of Lemma \ref{fcr}, Remark \ref{rme} and Proposition \ref{ubsh}.

(2) The proof of this case uses the same argument like the previous one.

(3) We have proved that in Remark \ref{r6}. However, we provide here a proof using the results of this section.
Note that by definition:
$$\gamma_n=1,\,\varrho_n^+=\varrho_n^-=1,$$
and then, from Lemma \ref{fcr}, Remark \ref{rme} and Proposition \ref{ubsh}, the result follows.
\end{proof}

\bibliographystyle{plain}
\bibliography{reference}

\begin{thebibliography}{1}

\bibitem{Dzh}
K.~Dzhaparidze.
\newblock {\em Introduction to option pricing in a securities market}.
\newblock CWI Syllabus 47, 2000.

\bibitem{Hapl}
J.M. Harrison and S.R. Pliska.
\newblock Martingales and stochastic integrals in the theory of continuous
  trading.
\newblock {\em Stochastic Process. Appl.}, 11(3):215--260, 1981.

\bibitem{Sch}
W.~Schachermayer.
\newblock {\em The asymptotic theory of transaction costs}.
\newblock Unpublished lecture notes, 2011--2012.

\bibitem{Shir}
A.N. Shiryaev.
\newblock {\em Essentials of stochastic finance : facts, models, theory}.
\newblock Singapore : World Scientific, 1999.

\bibitem{Sotti}
Tommi Sottinen.
\newblock \textnormal{Fractional Brownian motion, random walks, and binary
  market models}.
\newblock {\em Finance and Stochastics}, 5:343--355, 2001.

\bibitem{Totu}
S.~Torres and C.~Tudor.
\newblock \textnormal{Donsker theorem for the Rosenblatt process and a binary
  market model}.
\newblock {\em Stochastic Analysis and Applications}, 27(3):555--573, 2009.

\end{thebibliography}

\end{document}